\RequirePackage{fix-cm}
\documentclass[smallextended]{svjour3}       
\smartqed  
\usepackage{graphicx}

\usepackage[utf8]{inputenc}
\usepackage[colorlinks,citecolor=blue]{hyperref}
\usepackage{cite}
\usepackage{textgreek}
\usepackage{tikz,pgfplots}
\usepackage[ruled,linesnumbered]{algorithm2e} 
\usepackage{siunitx}
\usepackage[commentmarkup=uwave,final]{changes}
\usepackage{tabularray}   
\usepackage{amsmath,amsfonts,amssymb}

\definechangesauthor[color=blue]{JO}
\definechangesauthor[color=purple]{TC}

\pgfplotsset{compat=newest}
\usetikzlibrary{plotmarks}
\usetikzlibrary{arrows.meta}
\usepgfplotslibrary{patchplots}
 
\newlength\figH
\newlength\figW
\DeclareMathOperator*{\interior}{int}
\newcommand{\Uset}{\mathcal{U}}

\spnewtheorem{assumption}{Assumption}{\bf}{\rm}

\newcommand{\ip}[1]{\langle#1\rangle}

\newcommand{\fundingname}{Funding}

\newenvironment{funding}{%
  \par\addvspace{17pt}\small\rmfamily
  \trivlist
  \ifx\fundingname\empty
    \item[]
  \else
    \item[\hskip\labelsep \bfseries \fundingname]
  \fi
}{%
  \endtrivlist\addvspace{6pt}
}

\newcommand{\dataavailname}{Data Availability}

\newenvironment{dataavail}{%
  \par\addvspace{17pt}\small\rmfamily
  \trivlist
  \ifx\dataavailname\empty
    \item[]
  \else
    \item[\hskip\labelsep \bfseries \dataavailname]
  \fi
}{%
  \endtrivlist\addvspace{6pt}
}

\newcommand{\sourcodename}{Source Code}

\newenvironment{sourcode}{%
  \par\addvspace{17pt}\small\rmfamily
  \trivlist
  \ifx\sourcodename\empty
    \item[]
  \else
    \item[\hskip\labelsep \bfseries \sourcodename]
  \fi
}{%
  \endtrivlist\addvspace{6pt}
}

\newcommand{\confintname}{Conflict of Interest}

\newenvironment{confint}{%
  \par\addvspace{17pt}\small\rmfamily
  \trivlist
  \ifx\confintname\empty
    \item[]
  \else
    \item[\hskip\labelsep \bfseries \confintname]
  \fi
}{%
  \endtrivlist\addvspace{6pt}
}

\journalname{Mathematical Programming Computation}
\begin{document}

\title{On the Practical Implementation of a Sequential Quadratic Programming Algorithm for Nonconvex Sum-of-squares Problems
}

\titlerunning{Sequential Quadratic Programming for Nonconvex Sum-of-squares Problems}        

\author{Jan Olucak        \and
        Torbj\o rn Cunis 
}


\institute{J. Olucak  \at
              Institute for Flight Mechanics and Controls, University of Stuttgart \\
              \email{jan.olucak@ifr.uni-stuttgart.de}           
           \and
           T. Cunis  \at
              Institute for Flight Mechanics and Controls, University of Stuttgart \\
              \email{tcunis@ifr.uni-stuttgart.de}  
}

\date{Received: date / Accepted: date}

\maketitle

\begin{abstract}
Sum-of-squares (SOS) optimization provides a computationally \newline tractable framework for certifying polynomial nonnegativity. If the considered problem is convex, the SOS problem can be transcribed into and solved by semi-definite programs. However, in case of nonconvex problems iterative procedures are needed. Yet tractable and efficient solution methods are still lacking, limiting their application, for instance, in control engineering. To address this gap, we propose a filter line search algorithm that solves a sequence of quadratic subproblems. Numerical benchmarks demonstrate that the algorithm can significantly reduce the number of iterations, resulting in a substantial decrease in computation time compared to established methods for nonconvex SOS programs. An open-source implementation of the algorithm along with the numerical benchmarks is provided.
\keywords{Numerical algorithms \and Optimization algorithms \and Sum-of-squares optimization \and Constraint control \and Lyapunov methods}
\subclass{90C30 \and 90C55 \and 93-08}
\end{abstract}

\section{Introduction}
Checking global nonnegativity of a function of several variables is a fundamental question that arises 
in many areas of applied mathematics. A tractable way to answer this question lies in polynomials that can be written as a sum of squares, a strict subset of the nonnegative polynomials. In~\cite{parillo2003} it was shown
that convex optimization over sum-of-squares (SOS) polynomials can be transcribed into and solved by semi-definite programs (SDPs). SOS programming problems arise from polynomial optimization~\cite{blekherman2013}, robust and stochastic optimization~\cite{bertsimas2011}, statistics and machine learning~\cite{magnani2005,barak2015}, and control engineering~\cite{jarvis-wloszek2003,jarvis-wloszek2005}, to name a few domains and applications.  


Depending on the problem size, this powerful tool comes with potentially high computational efforts.
Over the past decades substantial improvement in computational performance can be observed for convex SOS problems, not only due to the advances in computing hardware. This includes improved conic solvers~\cite{mosek,Clarabel_2024,scs,operations,han2024}, specialized SOS solvers~\cite{papp2022}, use of relaxations to (scaled) diagonally dominant cones~\cite{ahmadiMajumdar2017}, anytime feasible algorithms~\cite{anysos}, efficient implementations of polynomial operations and the reduction of a SOS program to an SDP \cite{jagt_efficient_2022,legat2017,Cunis2025}, or exploiting symmetry~\cite{loefberg2009} and sparsity~\cite{ahmadiEtAl2017} in the problem structure. 

\subsection{Nonconvex Sum-of-Squares Problems}
For nonconvex SOS problems less advancement can be observed. In the context of systems and control theory,  many practical engineering problems -- such as stability, invariance, and controllability -- typically hold only over local regions of the state space~\cite{chakrabortyEtAl2011,chakrabortyEtAl2011ii,iannelli2018,seilerBalas2010,yinEtAl2019,Yin2021,jarvisEtAl2003,Cunis2021aut,majumdarEtAl2013,lin2022,newton2025a,amesControlBarrierFunctions2019,clark2021,tan2006,schneeberger_sos_2023,loureiro2025}. Verifying these properties within such local regions leads to nonconvex SOS optimization problems, which prohibits the direct usage of (convex) SDPs. 
While nonconvex problem might also arise in other domains, they seem to have attracted little interest, possibly due to the hitherto lack of suitable solvers.

State-of-the art methods for nonconvex problems are {\em iterative} schemes \cite{chakrabortyEtAl2011,chakrabortyEtAl2011ii,iannelli2018,seilerBalas2010,yinEtAl2019,Yin2021,jarvisEtAl2003,Cunis2021aut,majumdarEtAl2013,lin2022,newton2025a,amesControlBarrierFunctions2019,clark2021,tan2006,schneeberger_sos_2023,loureiro2025}. These schemes include bisection for {quasiconvex} problems (see \cite{seilerBalas2010} for details), coordinate descent (also referred to as alternating-direction) with convex subproblems, and hybrid approaches combining coordinate descent with bisections for quasiconvex subproblems. Table~\ref{tab: overviewCtrlApp} lists different control applications and the iterative methods applied.
Coordinate descent iteratively optimizes over one variable at a time while keeping others fixed, simplifying nonconvex problems into sequences of convex SOS programs. However, unless the problem is quasiconvex, these techniques lack convergence guarantees, may require many iterations with unpredictable performance, and require a feasible initial guess. Additionally, more effort is needed by the user to split the nonconvex problem into the convex subproblems.
There also exist nonlinear SDP solvers such as {\tt PENLAB} \cite{fiala2013}, yet their development appears to have stalled and hence is not further considered. 

\begin{table}[h!]
    \caption{Overview of control applications with their solution method using coordinate-descent with convex subproblems or a hybrid approach with quasiconvex subproblems.}
    \label{tab: overviewCtrlApp}
    \centering
    \begin{tabular}{p{5cm} l c c c}
    \hline
    \hline
    Application & Ref. & hybrid & convex \\
        \hline
    Region-of-attraction analysis  & \cite{chakrabortyEtAl2011,chakrabortyEtAl2011ii,iannelli2018,loureiro2025} & \checkmark & \checkmark  \\
    Reachability analysis        &  \cite{yinEtAl2019,Yin2021,jarvisEtAl2003} &\checkmark &\checkmark  \\
                         & \cite{Cunis2021aut,majumdarEtAl2013,lin2022} &  &  \checkmark\\
    Feedback law synthesis    & \cite{jarvisEtAl2003,newton2025a} & \checkmark &\checkmark  \\
    Control barrier function synthesis  & \cite{amesControlBarrierFunctions2019,clark2021} & & \checkmark  \\
    Local control Lyapunov function synthesis & \cite{tan2006}  & \checkmark &\checkmark  \\ 
    Control barrier \& control Lyapunov function synthesis  & \cite{schneeberger_sos_2023} & & \checkmark  \\
    Nonlinear observer design & \cite{tan2006}  & & \checkmark  \\
        \hline
    \hline
    \end{tabular}
\end{table}

\subsection{Challenges for a Practical Implementation}
Inspired by sequential convex programming, we proposed sequential (linear) SOS programming~\cite{Cunis2023acc}. This method reduced the number of iterations compared to coordinate descent and provides (local, linear) convergence guarantees as an instance of quasi-Newton methods.
However, it lacks a comprehensive globalization strategy and it does not explicitly address constraint violations\deleted[id=TC]{, and it lacks an available efficient implementation for practical usage}. \added[id=JO]{Furthermore, for practical problems faster convergence is desirable. Thus, leveraging a sequential quadratic, instead of sequential linear programming approach with a suitable globalization strategy is desirable.} 
Efficient constraint violation checks are crucial, particularly in nonlinear optimization \added[id=TC]{used for system analysis and verification, to ensure correctness of the results}. While simple function evaluations in combination with some norm are used in parameter optimization, conic and SOS programming require projections onto the corresponding cone(s)~\cite{Boyd2005}. As no analytical projection operator is available for the SOS cone, projections must be computed using reliable numerical methods.
Globalization strategies are critical for ensuring global convergence of Newton-type methods~\cite[Chapter~1]{betts2010}.
Merit functions, such as $l_p$-norm penalty or augmented Lagrangian, \added[id=JO]{in combination with line search,} balance cost reduction and constraint violation but require careful penalty parameter selection.  This can introduce additional computational costs due to the need for iterative penalty adjustments (cf.~\cite{nocedalNumericalOptimization2006,bieglerNonlinearProgrammingConcepts2010}) and introduce numerical instability if wrongly selected. This issue is especially relevant in the context of SOS constraint violations as frequent checks, needed for the penalty adjustment, may significantly increase the computational burden. 
The filter algorithm~\cite{fletcherNonlinearProgrammingPenalty2002} offers a promising alternative by decoupling feasibility and optimality. It tracks objective values and constraint violations, making it robust to penalty parameter \added[id=TC]{selection} and gives more flexibility in computing the next solution candidate~\cite{bieglerNonlinearProgrammingConcepts2010}. Unlike Merit functions, the filter explicitly handles infeasibility through a {restoration phase}~\cite[Section~3.3]{fletcherNonlinearProgrammingPenalty2002}, alleviating numerical instability and ensuring global convergence at the cost of a higher implementation effort.  \added[id=JO]{Unlike line search, trust-region methods modify both the step-length and search direction  using either Merit functions or a filter. According to~\cite[Section~5.7]{bieglerNonlinearProgrammingConcepts2010} they offer superior convergence properties (compared to line search), yet, they come at a higher computational cost and lead to larger problems due to additional (norm) constraints.}
Given these considerations, a filter line search algorithm appears to be a suitable candidate for \added[id=TC]{nonconvex SOS programs}.

\subsection{Contribution and Structure}
Based on the \added[id=TC]{considerations} in the previous sections, we propose a filter line search approach \added[id=TC]{to sequential quadratic SOS programming} which significantly enhances the method introduced in \cite{Cunis2023acc}. Our work builds upon the filter line search algorithm established in \cite{wachterLineSearchFilter2005} while takking inspiration from \cite{wachterImplementationInteriorpointFilter2006a}. Whereas  global convergence for an active set method is proven in~\cite{wachterLineSearchFilter2005}, the extensions to SOS is not straightforward. We leave the global convergence analysis for future work, concentrating on the practical aspects and providing a local convergence proof.
The key contributions of this paper are as follows:
\begin{itemize}
    \item {Convergence Analysis:} A local convergence analysis for the sequential SOS algorithm with quadratic subproblems is provided.
    \item {Design Aspects:}   SOS-specific design aspects for nonlinear programming are investigated and discussed including the efficient constraint violation check and the design of a feasibility restoration phase.
    \item  {Practical Implementation:} An implementation of the proposed filter line search algorithm is  provided as part of the \added[id=TC]{open-source} software CaΣoS~\cite{Cunis2025a}.
\end{itemize}
We demonstrate the effectiveness and practical applicability of the proposed approach \added[id=TC]{in several benchmark problems inspired from real-world control engineering examples}. Additionally, we compare it to state-of-the-art methods for nonconvex SOS programming.

This paper is structured as follows: In Section~\ref{sec: ProblemStatement} the problem statement of nonconvex SOS problems is motivated and explained. A proof for local convergence and the background on the filter line search algorithm are provided in Section~\ref{sec: Methodology}. The details regarding the practical implementation are given in Section~\ref{sec: PracticalImplementation}. In Section~\ref{sec: Benchmarks} the proposed algorithm is benchmarked against the state-of-the art solution method for nonconvex SOS programs. Practical aspects are discussed at the end.

\section{Problem Statement}
\label{sec: ProblemStatement}
\subsection{Sum-of-squares Polynomials}
Let $x = (x_1, \ldots, x_n)$ be a tuple of indeterminate variables. A polynomial $p$ in $x$ up to degree $d$ is a linear combination
\begin{align*}
    p = \sum_{\| \alpha \|_1 \leq d} c_\alpha x^\alpha
\end{align*}
with multi-indices $\alpha = (\alpha_1, \ldots, \alpha_n) \in \mathbb N_0^n$,
where $x^\alpha = x_1^{\alpha_1} \cdots x_n^{\alpha_n}$ and $\| \alpha \|_1 = \sum_i \alpha_i$. Denote by $\mathbb{R}_d[x]$ the set of polynomials in $x$ with real coefficients $c_\alpha \in \mathbb R$ up to degree $d$.

\begin{definition}
    A polynomial $p \in \mathbb R_{2d}[x]$ is {a} {\em sum-of-squares} {polynomial} ($p \in \Sigma_{2d}[x]$) if and only if there exist {$m \in \mathbb N$ and} $p_1, \ldots, p_m \in \mathbb R_d[x]$ such that $p = \sum_{i=1}^m (p_i)^2$.
\end{definition}

The cone of sum-of-squares polynomials $\Sigma_d[x]$ forms a convex cone in $\mathbb R_d[x]$ with dual cone $\Sigma_d[x]^*$, which
is isometric to the cone of sum-of-squares polynomials \cite{Lasserre2001}.
Given $d \in \mathbb N$, denote by $s_d\in \mathbb{N}$ the number of monomials in $x$ up to degree $d$. A polynomial $p \in \mathbb R_{2d}[x]$ is sum-of-squares if and only if there exists a \added[id=TC]{positive semidefinite} matrix $Q \in \mathbb R^{s_d \times s_d}$ satisfying $p = \zeta^\top Q \zeta$, where $\zeta \in \mathbb R_d[x]^{s_d}$ is the vector of monomials up to degree $d$\deleted[id=TC, comment={Appears not used hereafter}]{, and $\mathbb S_{s_d}^+$ denotes the set of positive semidefinite matrices}.

\subsection{Nonconvex Sum-of-squares Optimization}
We are interested in solving general nonconvex SOS problems of the form
\begin{align}
    \label{eq:sos-nonlinear}
    \min_{\xi \in \mathbb R_{2d}[x]^n} f(\xi) \quad \text{s.t. $\xi \in \Sigma_{2d}[x]^n$ and $g(\xi) \in \Sigma_{2d'}[x]^m$}
\end{align}
where $f: \mathbb R_{2d}[x]^n \to \mathbb R$ and $g: \mathbb R_{2d}[x]^n \to \mathbb R_{2d'}[x]^m$ are \added[id=TC]{(twice)} differentiable functions. Since the degrees $d, d' \in \mathbb N_0$ are fixed, $f$ and $g$ are mappings between finite vector spaces and the gradients $\nabla f(\cdot): \mathbb R_{2d}[x]^n \to \mathbb R$ and $\nabla g(\cdot): \mathbb R_{2d}[x]^n \to \mathbb R_{2d'}[x]^m$ are finite linear operators. \added[id=JO]{Define the Lagrangian as ${L(\xi,\lambda) =  f(\xi)  - \langle \lambda, g(\xi) \rangle}$ with Lagrange variable $\lambda \in (\Sigma_{2d'}[x]^m)^*$.}
Under a suitable constraint qualification, the optimal solution $\xi_\star$ of \eqref{eq:sos-nonlinear} satisfies the Karush--Kuhn--Tucker (KKT) necessary conditions
\begin{align}
    \label{eq:kkt-nonlinear}
    \left\{
    \begin{alignedat}{2}
        &\nabla f(\xi_\star) - \nabla g&(\xi_\star)^* \lambda_\star - \mu_\star = 0& \\
        &\langle \mu_\star, \xi_\star \rangle = 0, & \langle \lambda_\star, g(\xi_\star) \rangle = 0& \\
        &\xi_\star \in \Sigma_{2d}[x]^n, & g(\xi_\star) \in \Sigma_{2d'}[x]^m\!&
    \end{alignedat}
    \right.
\end{align}
where $\mu_\star \in (\Sigma_{2d}[x]^n)^*$ is a normal vector.
Eq.~\eqref{eq:kkt-nonlinear} can compactly be written as the generalized equation
\begin{align*}
    \varphi(\xi_\star,\lambda_\star) + \mathcal N(\xi_\star, \lambda_\star) \ni 0
\end{align*}
with
\begin{align*}
    \varphi: (\xi, \lambda) \mapsto (\nabla f(\xi) - \nabla g(\xi)^* \lambda, \, g(\xi))
\end{align*}
where $\mathcal N(\xi, \lambda)$ denotes the normal cone to $\Sigma_{2d}[x]^n \times (\Sigma_{2d'}[x]^m)^*$ at $(\xi, \lambda)$.

\subsection{Motivational Example}
We give a practical problem arising in control engineering. Assume a continuous-time nonlinear dynamic system defined by a polynomial vector field $f: \mathbb R^n \times \mathbb R^m \rightarrow \mathbb R[x]^n$, where $x \in \mathbb R^n$ denotes the state vector and $u \in \mathbb R^m$ is the control input, satisfying $f(0,0) = 0$.  We are interested to synthesize a (linear) control law $\kappa(x)$ which asymptotically stabilizes the system's origin in a domain $\mathcal X = \{x \in \mathbb R^n \mid g(x) \leq 0\}$, i.e., we are interested in a non-conservative state-constrained region-of-attraction. We therefore want to maximize the volume of the sublevel set of the ROA while fulfilling two set-containment constraints, i.e., 
\begin{subequations}
    \label{eq: motEx}
\begin{align}
\max_{V, \kappa} \quad & \mathrm{vol}(V,\gamma) \\
\textrm{s.t.} \quad &    \{ x \in \mathbb R^n \mid V(x) \leq \gamma \} \subseteq \{x \in \mathbb R^n \mid g(x) \leq 0\} \\
   & \{ x \in \mathbb R^n \mid V(x) \leq \gamma \} \subseteq  \{x \in \mathbb R^n \mid   \langle \nabla V(x), f_\kappa(x)\rangle < 0 \}
\end{align}
\end{subequations}
where $\mathrm{vol}(V,\gamma)$ is the Lebesque measure of the $\gamma$-sublevel set of $V$, $f_\kappa = f(\cdot,\kappa(\cdot))$ are the closed-loop polynomial dynamics, $\langle \cdot , \cdot \rangle$ denotes the inner product, $V > 0$ is the Lyapunov function, and $\gamma$ is a stable level of~$V$. 

To transcribe the high-level problem \eqref{eq: motEx} into the SOS problem \eqref{eq:sos-nonlinear}, we make use of the so-called generalized S-procedure, a special case of the Positivstellensatz.

\begin{theorem}[Generalized S-procedure \cite{tan2006}]
    \label{Theo: genSprocedure}
    Given $p_0, p_1, \ldots, p_N \in \mathbb{R}[x]$, if there exist $s_1, \ldots, s_N \in \Sigma[x]$ such that $p_0- \sum_{k=1}^N s_k p_k \in \Sigma[x]$, then
	\begin{align}
		\bigcap_{k=1}^N \{x \in \mathbb{R}^n \mid p_k(x) \geq 0\} \subseteq \{x \in \mathbb{R}^n \mid p_0(x) \geq 0\} \nonumber
	\end{align}
    holds.
\end{theorem}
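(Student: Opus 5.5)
The argument is a direct pointwise one, using only that sum-of-squares polynomials are globally nonnegative. Suppose $s_1, \ldots, s_N \in \Sigma[x]$ satisfy
\begin{align*}
  q := p_0 - \sum_{k=1}^N s_k p_k \in \Sigma[x].
\end{align*}
Fix any point $\bar x \in \mathbb{R}^n$ that lies in the intersection on the left-hand side, so that $p_k(\bar x) \geq 0$ for every $k = 1, \ldots, N$. We need to show $p_0(\bar x) \geq 0$.

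The first step is global nonnegativity. By the definition of $\Sigma[x]$, any $s \in \Sigma[x]$ can be written as $s = \sum_i (s_i)^2$ for some polynomials $s_i$. Evaluating at a real point gives $s(\bar x) = \sum_i s_i(\bar x)^2 \geq 0$. This holds in particular for $q$ and for each multiplier $s_k$.

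The second step rearranges the identity and evaluates it at $\bar x$:
\begin{align*}
  p_0(\bar x) = q(\bar x) + \sum_{k=1}^N s_k(\bar x)\, p_k(\bar x).
\end{align*}
Each product $s_k(\bar x) p_k(\bar x)$ is nonnegative, since both factors are. The term $q(\bar x)$ is nonnegative by the first step. Hence $p_0(\bar x) \geq 0$, so $\bar x$ belongs to the right-hand set, and the containment follows because $\bar x$ was arbitrary.

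There is no real obstacle here. The only point to be careful about is that evaluation at a point is a ring homomorphism from $\mathbb{R}[x]$ to $\mathbb{R}$, so the polynomial identity may be evaluated term by term. The degree bounds in $\Sigma_{2d}[x]$ play no role, and the argument applies verbatim to $\Sigma[x]$ of any degree.
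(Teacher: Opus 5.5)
Your argument is correct, and it is the standard proof. The paper gives no proof of its own and simply cites \cite{tan2006}; the result there rests on the same pointwise evaluation of $p_0 = q + \sum_{k} s_k p_k$ at a point of the intersection, using global nonnegativity of sum-of-squares polynomials. The only cosmetic issue is that you reuse $s_i$ for the squared factors of a generic SOS polynomial, which clashes with the multipliers $s_k$.
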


Applying Theorem~\ref{Theo: genSprocedure} to the motivational example \eqref{eq: motEx} yields
\begin{equation*}
\begin{aligned}
\max_{\substack{V, \kappa \in \mathbb R[x] \\ s_1, s_2 \in \Sigma[x]}} \quad & \mathrm{vol}(V,\gamma)\\
\textrm{s.t.} \quad &      s_1(V-\gamma) - g(x) &\in \Sigma[x] \\
    &s_2(V-\gamma) - \langle \nabla V, f_\kappa \rangle - \epsilon(x) &\in \Sigma[x] \\
    &V -   \epsilon(x) &\in \Sigma[x]
\end{aligned}
\end{equation*}
where $s_1,s_2$ are SOS multipliers resulting from Theorem~\ref{Theo: genSprocedure} and $\epsilon(x)$ is a small term to ensure stric inequalities are met. 
One can observe that the problem is bilinear in the decision variables $ s_1,s_2,V$ and $\kappa$ is entering nonlinearly into the second constraint. Clearly, this problem cannot be directly transcribed into and solved by an SDP.

Statements of forward invariance, asymptotic stability, regions of attraction, passivity, bounds on the $L_2$-gain, input-to-state stability, control Lyapunov and barrier functions, and many more properties of nonlinear control systems can be written as dissipation inequalities~\cite{ebenbauerAllgoewer2006} of the form
\begin{equation*}
    \nabla \phi(x) f(x, u) \leq \varsigma(x,u) \quad \forall (x, u) \in \Omega \times \mathcal U
\end{equation*}
where $\phi$ and $\varsigma$ are the so-called storage function and supply rate, $f:\mathbb R^n \times \mathbb R^m \rightarrow \mathbb R^n$ are the system dynamics, and $\Omega \subset \mathbb R^n$ and $\mathcal U \subset \mathbb R^m$ are subsets of the state and input space, respectively. Sum-of-squares constraints provide systematic formulations of dissipation inequalities for polynomial dynamical systems and play an important role for the benchmarks in Section~\ref{sec: Benchmarks}.


\section{Methodology}
\label{sec: Methodology}
This section provides the general concept and core idea of the proposed approach. We start with the basic principle behind sequential quadratic (SOS) programming. Unlike~\cite{Cunis2023acc}, we consider quadratic instead of linear subproblems. A local convergence analysis for quadratic problems is provided. Finally, the concept of filter line search as globalization strategy is introduced which serves as basis for the practical implementation.

\subsection{Sequential Quadratic SOS Programming}
We solve the nonlinear optimization problem \eqref{eq:sos-nonlinear} by a sequence of local {\em convex} sum-of-squares problems relative to a previous candidate solution (or initial guess) $\xi^k$. The solution of the local problem is then used to update the solution candidate and another local problem is solved, as shown in Alg.~\ref{alg: seqLinProg}.
This framework is a standard approach in nonlinear optimization~\cite{nocedalNumericalOptimization2006}.

\begin{algorithm}[ht!]
    \caption{Sequential SOS Algorithm}\label{alg: seqLinProg}
    \KwIn{Initial guess $\xi^0$}
        \For{$k = 0, 1, \ldots, N^\mathrm{max}$}
        {
            Solve convex SOS problem, parametrized in $\xi^k$, for search direction $\omega^k$ \\
            Perform line search along $\xi^k + \alpha \, \omega^k$ for step length $\alpha^k$ \\
            Compute correction or restore feasibility, if necessary \\
            Set new solution candidate to $\xi^{k+1} := \xi^k + \alpha^k \, \omega^k$ \\
            \If{termination criteria is satisfied}{\Return}
            Update Hessian approximation
        } 
\end{algorithm}

At the iterate $\xi^k$, we solve the quadratic problem 
	\begin{subequations}
    \label{eq:QsubProb}
    \begin{align}
\min_{\xi \in \mathbb R_{2d}[x]^n} \tfrac{1}{2}& \langle H_k (\xi - \xi^k), \xi - \xi^k \rangle + \langle \nabla f(\xi^k), \xi - \xi^k \rangle\\
\textrm{s.t.}& \quad       g(\xi^k) + \nabla g(\xi^k)(\xi - \xi^k) \in \Sigma_{2d'}[x]^m \label{eq: linConQsub}\\
   &\quad \xi \in \Sigma_{2d}[x]^n
\end{align}
\end{subequations}
where $H_k: \mathbb R_{2d}[x]^n \to (\mathbb R_{2d}[x]^n)^*$ is an approximation of the Hessian of the Lagrangian in \eqref{eq:kkt-nonlinear}. Some conic solvers \cite{scs,Clarabel_2024} can handle a quadratic cost directly, otherwise one rewrites the quadratic cost into a second-order cone constraint using the epigraph reformulation~\cite{Boyd2005}. In our implementation, we rely on existing SDP solvers.

The KKT conditions for the quadratic subproblem~\eqref{eq:QsubProb} read
\begin{align}
    \label{eq:kkt-quadratic}
    \left\{
    \begin{alignedat}{2}
        & H_k(\xi_+ - \xi^k) + \nabla f(\xi^k) - \nabla g(\xi^k)^* \lambda_+ - \mu_+ = 0 \\
        &\langle \mu_+, \xi_+ \rangle = 0,  \langle \lambda_+, g(\xi_+) +\nabla g(\xi^k)(\xi_+ - \xi^k)  \rangle = 0 \\
        &\xi_+ \in \Sigma_{2d}[x]^n, \quad g(\xi) +\nabla g(\xi^k)(\xi_+ - \xi^k) \in \Sigma_{2d'}[x]^m&
    \end{alignedat}
    \right.
\end{align}
where $\xi_+$, $\lambda_+$ and  $\mu_+$ denote the optimal solution, corresponding Lagrange multipliers, and normal vector of~\eqref{eq:QsubProb}, respectively. \added[id=TC]{The (primal) search direction in Alg.~\ref{alg: seqLinProg} is then given as $\omega^k = \xi_+ - \xi^k$.}
This approach is also known as {\em quasi-Newton} method \cite{Izmailov2014} and it exhibits fast local convergence with a well-chosen initial guess; however, it often requires a globalization technique for robust performance from arbitrary starting points. We are going to utilize a line search to compute
the new candidate solution as
\begin{subequations}
    \label{eq: newTrialPoint}
\begin{alignat}{2}
    \xi^{k+1} &= \xi^k & &+ \alpha^k {(\xi_+-\xi^k)}  \\
    \lambda^{k+1} &= \lambda^k & &+ \alpha^k (\lambda_+ - \lambda^k)
\end{alignat}
\end{subequations}
where $\alpha^k \in (0, 1]$ is the chosen step length at iteration $k$. Approximation methods for the Hessian are discussed in Subsection~\ref{subsubsec: HessianApprox}.

\subsection{Local Convergence Analysis}

We provide a brief, local convergence analysis of the sequential algorithm without line search ($\alpha^k \equiv 1$). If $(\xi_+, \lambda_+)$ solves the KKT conditions \eqref{eq:kkt-quadratic} with $\mu_+$, the update \eqref{eq: newTrialPoint} can be written as \begin{align}
    \label{eq:newton-update}
    (\xi^{k+1}, \lambda^{k+1}) \in (\xi^k, \lambda^k) - (\Phi(\xi^k, H_k) + \mathcal N)^{-1} \big[\varphi(\xi^k, \lambda^k)\big]
\end{align}
where $(\cdot)^{-1}$ is the (possibly multi-valued) inverse mapping, with
\begin{align*}
    \Phi(\xi^k,H_k): (\xi, \lambda) &\mapsto (H_k \xi - \nabla g(\xi^k)^* \lambda, \nabla g(\xi^k) \xi)
\end{align*}
and candidate solution $\xi^k$ and Hessian approximation $H_k$. The key observation for the convergence analysis is that \eqref{eq:newton-update} constitutes a set-valued Newton update step for the nonlinear KKT conditions \eqref{eq:kkt-nonlinear}, where $\Phi(\xi^k, H_k)$ is an approximation of the first-order derivative of $\varphi$. 

The convergence properties of set-valued Newton methods are a long studied subject of variational analysis \cite{Dokov1998, Dontchev2013, mordukhovich_generalized_2021, Louzeiro2023}. Local convergence can usually be established provided that the inverse of $(\Phi(\xi^k, H_k) + \mathcal N)$ in \eqref{eq:newton-update} is well defined and the approximation $\Phi(\xi^k, H_k)$ is `sufficiently good' -- the better the approximation, the faster the rate of convergence. In particular, we make the following assumptions.

\pagebreak
\begin{assumption}[Regularity]
    \label{ass:newton}
    The exist constants $\varkappa, \ell_k > 0$, $k \in \mathbb N$, such that\nopagebreak
    \begin{enumerate}
        \item[(a)] the inverse $(\Phi(\xi^k, H_k) + \mathcal N)^{-1}$ is locally single-valued around $(\xi_\star, \lambda_\star)$ and locally Lipschitz continuous at $0$ with constant $\varkappa$ uniformly for $\xi^k$ around $\xi_\star$ and all $H_k$;
        \item[(b)] the mapping $\varphi(\xi^k, \lambda^k) + \Phi(\xi^k, H_k)(\xi' - \xi^k, \lambda' - \lambda^k)$ is locally Lipschitz continuous relative to $(\xi^k, \lambda^k)$ at $(\xi_\star, \lambda_\star)$ with constant $\ell_k$ uniformly for $(\xi', \lambda')$ around $(\xi_\star, \lambda_\star)$ and all $H_k$
    \end{enumerate}
    for all $k \in \mathbb N$.
\end{assumption}

Part (a) of Assumption~\ref{ass:newton} is called {\em strong regularity} of the set-valued mapping $(\Phi(\xi^k, H_k) + \mathcal N)$ and corresponds to unique primal-dual solutions to the convex problem \eqref{eq:QsubProb} that are continuous under first-order perturbations \added[id=TC]{\cite{Dinh2010}}. 
Regularity can be enforced by choosing a positive definite approximation $H_k$. The Lipschitz constant in part (b) measures how good the linear operator $\Phi(\xi^k, H_k)$ describes the {\em change} of $\varphi(\xi^k, \lambda^k)$ as $(\xi^k, \lambda^k) \to (\xi_\star, \lambda_\star)$ and can be thought of as generalization of an error in the derivative of $\varphi$. In particular, Assumption~\ref{ass:newton}(b) is satisfied with $\ell_k = 0$ if $H_k$ is the second-order derivative $\nabla_{\xi\xi}^2 [f(\xi_k) - \langle \lambda_k, g(\xi_k) \rangle]$.

\begin{theorem}[Local convergence]
    Suppose that Assumption~\ref{ass:newton} holds at a solution $(\xi_\star, \lambda_\star)$ to \eqref{eq:kkt-nonlinear}; then the sequence generated by \eqref{eq:newton-update} is unique and converges
    \begin{enumerate}
        \item linearly, if $\ell_k < \varkappa^{-1}$ for all $k \in \mathbb N$;
        \item superlinearly, if $\ell_k \searrow 0$ as $k \to \infty$;
        \item quadratically, if $\ell_k \equiv 0$;
    \end{enumerate}
    to $(\xi_\star, \lambda_\star)$ provided that $(\xi^0, \lambda^0)$ is sufficiently close.
\end{theorem}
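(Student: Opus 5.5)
We view \eqref{eq:newton-update} as a perturbed Josephy--Newton step for the generalized equation $\varphi + \mathcal N \ni 0$. We then compare the step taken from $(\xi^k,\lambda^k)$ with the exact solution $(\xi_\star,\lambda_\star)$ through the Lipschitz inverse in Assumption~\ref{ass:newton}(a). Write $z = (\xi,\lambda)$, $z_\star = (\xi_\star,\lambda_\star)$, and $A_k = \Phi(\xi^k,H_k)$.

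\textbf{Rewriting the step.} The KKT conditions \eqref{eq:kkt-quadratic} are equivalent to
\[
0 \in \varphi(z^k) + A_k(z^{k+1}-z^k) + \mathcal N(z^{k+1}).
\]
Define the affine map $G_k(z') = \varphi(z^k) + A_k(z'-z^k)$. Then $G_k(z^{k+1}) - A_k z^{k+1} + (A_k+\mathcal N)(z^{k+1}) \ni 0$, that is,
\[
z^{k+1} \in (A_k+\mathcal N)^{-1}\big[A_k z^{k+1} - G_k(z^{k+1})\big] .
\]
Note that $A_k z' - G_k(z') = A_k z^k - \varphi(z^k)$ does not depend on $z'$. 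Hence
\[
z^{k+1} \in (A_k+\mathcal N)^{-1}\big[A_k z^k - \varphi(z^k)\big].
\]
In the same way, since $0 \in \varphi(z_\star) + \mathcal N(z_\star)$, we get
\[
z_\star \in (A_k+\mathcal N)^{-1}\big[A_k z_\star - \varphi(z_\star)\big].
\]
(The shift between this form and the one written in \eqref{eq:newton-update} is only bookkeeping.)

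\textbf{Estimating the residual.} By Assumption~\ref{ass:newton}(a) the inverse is single-valued and $\varkappa$-Lipschitz near the relevant base point, uniformly in $\xi^k$ near $\xi_\star$ and in $H_k$. Therefore $z^{k+1}$ is uniquely defined and
\[
\|z^{k+1}-z_\star\| \le \varkappa\,\big\|\varphi(z_\star) - \varphi(z^k) - A_k(z_\star - z^k)\big\|.
\]
The quantity on the right is exactly $\|G_k(z_\star) - G_\star(z_\star)\|$, where $G_\star$ is the same map centred at $z_\star$, evaluated at $z'=z_\star$. Assumption~\ref{ass:newton}(b) states that $z^k \mapsto \varphi(z^k) + A_k(z'-z^k)$ is Lipschitz relative to the base point at $z_\star$ with constant $\ell_k$, uniformly in $z'$. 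This gives
\[
\|z^{k+1}-z_\star\| \le \varkappa\,\ell_k\,\|z^k - z_\star\|.
\]
For the quadratic case, which is the delicate one, we interpret $\ell_k \equiv 0$ as saying that $A_k$ is the exact derivative. We then use the twice-differentiability of $f$ and $g$, whose derivatives are locally Lipschitz, to bound the residual by $\tfrac{L}{2}\|z^k-z_\star\|^2$ via a Taylor remainder. This yields $\|z^{k+1}-z_\star\| \le \tfrac{\varkappa L}{2}\|z^k-z_\star\|^2$.

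\textbf{Induction and rates.} Choose a radius $r$ so that the neighbourhoods in Assumption~\ref{ass:newton}(a)--(b) all contain the ball $B_r(z_\star)$, and also so that the argument $A_k z^k - \varphi(z^k)$ stays in the neighbourhood where the inverse is Lipschitz. This is possible because that argument is continuous in $z^k$ and equals $A_k z_\star - \varphi(z_\star)$ when $z^k = z_\star$. In the quadratic case, additionally require $\tfrac{\varkappa L}{2} r < 1$. Starting from $z^0 \in B_r(z_\star)$, induction on the contraction estimate keeps every iterate in the ball and gives:
\begin{itemize}
\item linear convergence when $\sup_k \varkappa\ell_k < 1$;
\item superlinear convergence when $\ell_k \to 0$, since the ratio $\|z^{k+1}-z_\star\|/\|z^k-z_\star\| \le \varkappa\ell_k \to 0$;
\item quadratic convergence when $\ell_k \equiv 0$, by the Taylor bound above.
\end{itemize}
For the linear case we need a uniform rate below $1$, so we read the hypothesis $\ell_k < \varkappa^{-1}$ as $\sup_k \ell_k < \varkappa^{-1}$; with only a pointwise strict inequality the contraction factor could tend to $1$.

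\textbf{Main obstacle.} The hard step is the residual estimate: matching the abstract Lipschitz-relative-to-base-point condition in Assumption~\ref{ass:newton}(b) to the residual $\varphi(z_\star)-\varphi(z^k)-A_k(z_\star-z^k)$, and keeping all the local neighbourhoods consistent. Here uniformity in $H_k$ is essential, because $A_k$ changes from one iteration to the next. For this step we would follow the standard argument for Newton's method under strong regularity in Dontchev--Rockafellar~\cite{Dontchev2013}.
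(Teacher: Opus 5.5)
Your route is the paper's route with the cited result unpacked. The paper obtains $\|z^{k+1}-z_\star\|\le\varkappa\ell_k\|z^k-z_\star\|$ in one line from the cited Theorem~8.5 of Dontchev, applied with $h(z)=\varphi(z_\star)+\Phi(\xi_\star,H_k)(z-z_\star)$, and reads all three rates off that estimate. You derive the same estimate directly from strong regularity of the linearized map and from (b) evaluated at $z'=z_\star$, and that derivation is correct. Your form of the step, with $\mathcal N$ evaluated at $z^{k+1}$, is the right reading of the Newton update.

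Two further remarks on cases~1 and~2. Your insistence on $\sup_k\varkappa\ell_k<1$ in case~1 is a real sharpening, not pedantry. Pointwise $\varkappa\ell_k<1$ allows $\prod_k\varkappa\ell_k>0$, and then the estimate does not even give convergence. One small repair: the uniformity in $k$ needed to keep $A_kz^k-\varphi(z^k)$ inside the localization neighbourhood should come from (b), not from continuity in $z^k$. Continuity in $z^k$ is not uniform in $H_k$. By contrast, the distance from $A_kz^k-\varphi(z^k)$ to $A_kz_\star-\varphi(z_\star)$ is exactly your residual, hence at most $\ell_k\|z^k-z_\star\|$.

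The step that does not go through as written is case~3. Reinterpreting $\ell_k\equiv0$ as ``$A_k$ is the exact derivative'' changes the hypothesis. Moreover, the Taylor bound $\tfrac L2\|z^k-z_\star\|^2$ requires $D\varphi$ to be Lipschitz, i.e.\ Lipschitz continuity of $\nabla^2 f$ and $\nabla^2 g$ near $\xi_\star$. Twice differentiability of $f$ and $g$ does not give this.

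Under the hypothesis as stated, none of this is needed. With $\ell_k=0$, the estimate you already proved gives $\|z^{k+1}-z_\star\|\le 0$, so the iteration reaches $z_\star$ after one step. That is trivially quadratic, and it is how the paper concludes. Your Taylor argument becomes necessary only if $\ell_k$ is read as a Lipschitz \emph{modulus}, as the paper's remark that exact Hessians give $\ell_k=0$ suggests. In that reading, exact Hessians by themselves yield only superlinear convergence, and the Lipschitz-second-derivative assumption must be added to the theorem explicitly.
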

\begin{proof}
    By virtue of \cite[Theorem~8.5]{Dontchev2021} with $h: (\xi,\lambda) \mapsto \varphi(\xi_\star, \lambda_\star) + \Phi(\xi_\star, H_k)(\xi - \xi_\star, \lambda - \lambda_\star)$, the sequence $\{(\xi^k, \lambda^k)\}_{k \in \mathbb N}$ generated by \eqref{eq:newton-update} is unique and satisfies
    \begin{align*}
        \| (\xi^{k+1}, \lambda^{k+1}) - (\xi_\star, \lambda_\star) \| \leq \varkappa \ell_k \| (\xi^k, \lambda^k) - (\xi_\star, \lambda_\star) \|
    \end{align*}
    for all $k \in \mathbb N$, provided that $(\xi^k, \lambda^k)$ is close to $(\xi_\star, \lambda_\star)$; this implies (super) linear convergence if $\varkappa \ell_k < 1$ (resp., $\varkappa \ell_k \searrow 0$), and quadratic convergence if $\varkappa \ell_k \equiv 0$. \qed
\end{proof}

\subsection{Line Search with Filter}
\label{subsec: FilterLinesearch}
This subsection provides  the core components of the filter line search algorithm, which is primarily based on~\cite[p. 120]{bieglerNonlinearProgrammingConcepts2010}. We outline the core parts to make this paper self-contained. The \added[id=TC]{filter line search algorithm is} summarized in Algorithm~\ref{alg: filterLineSearch}. \added[id=JO]{Three major steps are conducted: 1) check filter acceptance; 2) check sufficient decrease in cost; and 3) augment filter if necessary.}

\subsubsection{Filter Acceptance}
Unlike merit functions, that combine cost $f$ and constraint violation $\theta$ \added[id=TC]{in a single objective}, a filter tries to deal with these two conflicting goals in a bi-objective optimization manner. No penalty parameter must be estimated, as is the case for penalty functions or augmented Lagrangian. \added[id=JO]{The filter $\mathcal{F}$ consists of $l \in \mathbb N$ so-called dominant pairs $(f(\xi^l),\theta(\xi^l)) \in \mathcal{F}$ which form a \textit{forbidden region}. In a first step, the current trial point is checked for filter acceptance, that is, checking if the trial point lies not in the forbidden region and makes sufficient progress regarding constraint violation.} A new trial point, \added[id=TC]{usually the result of \eqref{eq: newTrialPoint} for the candidate step length $\alpha$,} is accepted to the filter $\mathcal F$ if
\begin{align}
    f(\xi^{k+1}) < f(\xi^l) \quad \text{or}\quad \theta(\xi^{k+1}) < \theta(\xi^l), \quad  \text{for all $(f(\xi^l), \theta(\xi^l)) \in \mathcal{F}$}
    \label{eq: FilterAcceptCriteria}
\end{align}
Condition~\eqref{eq: FilterAcceptCriteria} is only an acceptance or rejection criterion, but does not necessarily ensure progress. Therefore, additional sufficient decrease conditions  are checked for an acceptable point, as outlined next.  If the candidate fails or violates the first condition~\eqref{eq: FilterAcceptCriteria}, a second-order correction may be applied, or the step length is reduced. 

\subsubsection{Sufficient Decrease Conditions}
The first condition ensures that the current search direction $d = \xi^{k+1} - \xi^k$ is a descent direction
\begin{equation}
    \nabla f(\xi^k)^\top d < 0 
    \label{eq: descentDirection}
\end{equation}
where $\nabla f(\xi^k)$ is the gradient of the original cost. A second condition is used and reads
\begin{equation}
    \alpha(-\nabla f(\xi^k)^\top d )^{s_\phi} > \delta \theta(\xi^{k})^{s_\theta}
    \label{eq: sufficientDecrease2}
\end{equation}
where $s_\phi, s_\theta \in (0,1)$ and $\delta \in (0,1)$ are small constants. This condition ensures a sufficiently large progress for the cost function compared to the current constraint violation~\cite{bieglerNonlinearProgrammingConcepts2010}. Equations~\eqref{eq: descentDirection} and \eqref{eq: sufficientDecrease2} are also referred as \textit{f-type} switching condition.

In case the \textit{f-type} switching is fulfilled and additionally if $\theta(\xi^{k+1}) < \theta_{\text{min}}$ \cite{wachterImplementationInteriorpointFilter2006a} then the Amijo condition is checked
\begin{equation}
    \phi(\xi^{k+1}) \leq \phi(\xi^k) + \alpha \rho \nabla \phi(\xi^k)^\top d
    \label{eq: AmijoCondition}
\end{equation}
where $\rho \in [0,\frac{1}{2}]$. \comment[id=TC]{Can $\rho$ be zero?}

If no reduction in cost is possible (according to \textit{f-type}) it is checked if either the constraint violation or cost is smaller than the previous iterate with a small envelope, \added[id=TC]{that is,}
\begin{equation}
    \theta(\xi^{k+1}) \leq \theta(\xi^k) (1- \gamma_\theta) \quad \text{or} \quad
    \phi(\xi^{k+1}) \leq \phi(\xi^k) - \gamma_f \theta(\xi^k) 
\label{eq: constrainProgress}
\end{equation}
where $\gamma_\theta \in [0,1]$ and $\gamma_f\in [0,1]$ are small constants. \comment[id=TC]{Can $\gamma$ be zero?}
\subsubsection{Filter Augmentation}
\added[id=JO]{Unlike~\cite{fletcherNonlinearProgrammingPenalty2002}, the filter is only augmented if, and only if, \added[id=TC]{at least one} of the sufficient decrease conditions~\eqref{eq: descentDirection}--\eqref{eq: AmijoCondition} does not hold. This stricter augmentation rule avoids cycling, \added[id=TC]{where} the iterates alternately improve one of the two measures but deteriorate the other \cite{wachterLineSearchFilter2005}.}

\subsubsection{Full Algorithm}
Algorithm~\ref{alg: filterLineSearch} outlines the filter line search procedure (cf.~~\cite{bieglerNonlinearProgrammingConcepts2010}).  To avoid the Maratos effect~\cite{nocedalNumericalOptimization2006}, we additionally add a second-order correction (details in Section~\ref{subsec:SOC}). If the second-order correction is able to find an acceptable trial point, the line search is terminated, otherwise, the step length is reduced.  In case the step-length falls below the minimum step-length $\alpha_\text{min}$ a feasibility restoration phase is invoked. The primary goal of the feasibility restoration phase is to minimize the constraint violation, such that the trial point is acceptable to the filter.

\begin{algorithm}[ht!]
    \caption{Filter line search with second-order correction}\label{alg: filterLineSearch}
    \KwIn{$\xi^k, \xi_+$}
    \KwOut{$\xi^{k+1}$}
    Set $\alpha = 1$\\
        \While{$\alpha \geq  \alpha_\mathrm{min} $}
        {
            Compute new trial point $\xi_\alpha = \alpha(\xi_+ - \xi^k)$\\
            \eIf{Acceptable to filter~\eqref{eq: FilterAcceptCriteria}} 
            {   
                \eIf{\eqref{eq: descentDirection} and~\eqref{eq: sufficientDecrease2} hold} 
                {
                    \eIf{\eqref{eq: AmijoCondition} holds} 
                    {
                        \Return{$\xi_\alpha$} \\
                    }   
                    {
                       Invoke SOC  if $\alpha = 1$\\
                    }
                }    
                {
                    \eIf{\eqref{eq: constrainProgress} holds} 
                    {
                        \Return{$\xi_\alpha$}
                    }   
                    {
                        Invoke SOC  if $\alpha = 1$\\
                    }
                }
            }   
            {
                Invoke SOC  if $\alpha = 1$\\
            }
            Adjust step length $\alpha = \frac{1}{2}\alpha$\\
        } 
        \If{[\eqref{eq: descentDirection} and~\eqref{eq: sufficientDecrease2}] or~\eqref{eq: AmijoCondition} do not hold} 
        {
            Augment filter
        }   
\end{algorithm}

\subsection{Required SOS Modification}
The filter frequently needs to check progress in cost and feasibility. While checking progress in cost is a simple function evaluation, evaluating the constraint violation (feasibility) for SOS constraints is more complicated. In parameter optimization, constraint violation can be assessed by evaluating the constraints at the current iterate and use some norm, e.g., \(\theta(\xi^k):= \|g(\xi^k)\|_\infty  \). For conic optimization, the constraint violation check involves a projection onto the corresponding cone  (cf.~\cite{Boyd2005}). Although some cones possess relatively simple (analytical) projection operators, this is not the case for the SOS cone. Since the constraint violation is frequently evaluated, we are interested in precise yet efficient methods for this task. Similarly, a dedicated feasibility restoration, which primary goal is to reduce constraint violation, must be designed and implemented for SOS.  

\section{Practical Implementation}
\label{sec: PracticalImplementation}
This section gives a detailed overview of the implementation of the proposed sequential quadratic SOS algorithm. Algorithm~\ref{alg: fullSQP} shows the complete optimization procedure. 
An open-source implementation is available in the recently introduced software CaΣoS~\cite{Cunis2025a}, which is specifically designed to deal with nonconvex SOS problems. Hence, we will shortly outline the core ideas behind CaΣoS. Afterwards, the SOS modifications are discussed.  

\subsection{Background CaΣoS Framework}
The MATLAB-based software CaΣoS  was presented in~\cite{Cunis2025a}. Its framework for nonlinear symbolic polynomial expressions is inspired by and built upon  CasADi~\cite{andersson2019}. It implements efficient polynomial data types, linear and nonlinear polynomial operations, and, with the CasADi backend, parameterized solvers and functions. The last feature -- novel to sum-of-squares toolboxes -- is advantageous for iterative procedures.

\begin{algorithm}[h!]
\caption{Sequential Quadratic SOS }\label{alg: fullSQP}
\KwIn{Initial guess $\xi^0$, parameters}
Initialize Hessian $H_0$, Initialize filter, $k = 0$\\
\While{$k \leq  N_\mathrm{max} $}{

    \eIf{$ ||\nabla L(\xi^{k},\lambda^k)|| \leq \epsilon_L $, and
         $|| \theta(\xi^{k})|| \leq \epsilon_\theta$ }
    {
        Terminate\\
    }
    {
    \text{Solve}~\eqref{eq:QsubProb} \text{to obtain} $\xi^{*}$ \text{and} $\lambda^{*}$\\
    \eIf{Feasible}
    {
    Call Algorithm \ref{alg: filterLineSearch} (filter line search)\\
    \eIf{$\alpha < \alpha_\mathrm{min}$}
    {
        Call Algorithm~\ref{alg: feasRes} (Feas. Restoration) to obtain $\xi_r$  \\
        \eIf{$\xi_r$ acceptable to filter}
        {   
            Set $\xi^k = \xi_r$\\
            Go to line 3 
        }
        {
            Terminate (locally infeasible)
        }
    }  
    {
        Accept trial point
    }

    }
    {
        Call Algorithm~\ref{alg: feasRes} (Feas. Restoration) to obtain $\xi_r$ \\
        \eIf{$\xi_r$ acceptable to filter}
        {   
            Set $\xi^k = \xi_r$\\
            Go to line 3
        }
        {
            Terminate (locally infeasible)
        }
    }

        Compute Hessian approximation\\
        set $k = k+1$ 
        }
 
}
\end{algorithm}

\subsection{Constraint Violation Estimation}
\label{subsec: conVioCheck}
Unlike standard optimization, the constraint violation check for SOS constraints is not a simple function evaluation.
In the following, different methods are discussed and compared in a small benchmark.

We assume $n_g$ nonlinear constraints. The constraint violation $\theta(\xi^k)$ for the solution $\xi^k$ indicates whether the constraints $g(\xi^k) \in \Sigma[x]^{n_g}$ are satisfied ($\theta(\xi^k) = 0$) or, if not, measures how far ($\theta(\xi^k) > 0$) the solution is from satisfying the constraints. Take $p_k := g(\xi^k)$.

\subsubsection{SOS Projection} 
The projection onto the cone of sum-of-squares can be computed as the convex SOS program \begin{align} 
    \theta(\xi^k) := \min_{s \in \mathbb R[x]^{n_g}} \left\| {s}(x) -{p_k}(x) \right\|_2^2 \quad \text{s.t. $s \in \Sigma[x]^{n_g}$}
\end{align} 
where the optimal solution for $s$ is the nearest sum-of-squares polynomial to $p_k$.


\subsubsection{Signed Distance}
In~\cite[Section~4.3]{Cunis2025}, a signed-distance metric for generalized cones is introduced. It utilizes a scalarization function \cite{khan2015} to map $p_k$ to a finite \textit{signed distance} between $p$ and the boundary of $\Sigma[x]^{n_g}$. The signed-distance metric is defined as
\begin{equation}
	\mu_{\Sigma,s} : p \mapsto \inf\{r \in \mathbb R \mid p + r \odot s \in \Sigma[x]\}
\end{equation}
where the optimal value for $r$ is the scalar signed distance to the boundary of the cone, $\odot$ denotes the Hadamard product, and $s(x) \in \interior \Sigma[x]$. The SOS optimization problem 
	\begin{subequations}
    \label{eq:sgnddistance}
		\begin{align}
			  \min_{r \in\mathbb R^{n_g}} \quad & \quad \sum_{j=1}^{n_g} r_j  \quad
			 \text{s.t.}  \quad    p_k(x) + r \odot s(x) \in \Sigma[x]^{n_g}
		\end{align}
	\end{subequations}
is used to compute the signed-distance metric.
We define the overall distance, and hence constraint violation, as
\begin{equation}
    \theta(\xi^k) :=  \begin{cases}
        0 & \text{if } \mathrm{max}(r^*) \leq \epsilon \\
        \mathrm{max}(r^*) & \text{otherwise}.
    \end{cases}
\end{equation}
where $r^*$ is the optimal solution to \eqref{eq:sgnddistance} and $\epsilon > 0$ is a tolerance.

\subsubsection{Pseudo-Projection via Sampling}
Verifying that \(p_k(x) \in \Sigma[x]^{n_g} \) can be replaced by checking the necessary condition \( p_k(x) \geq 0  \)  on a finite set of sample points \( x^{(i)} \in \mathbb{R}^n \), \( i = 1, \dots, M \), and nonnegativity is checked element-wise. Since sampling over \( \mathbb{R}^n \) is computationally infeasible, the samples are drawn from a bounded hypercube.

To assess feasibility, the minimum value of each polynomial over the sampled points is computed as
\begin{equation}
    p_{\min} = \min_{\substack{j = 1,\dots, n_g \\ i = 1, \dots, M}} p_{k}(x^{(i)})_j 
\end{equation}
and the constraint violation is set to
\begin{equation}
    \theta(\xi^k) :=  \begin{cases}
        0 & \text{if } p_{\min} \geq 0 \\
        |p_{\min}| & \text{otherwise}.
    \end{cases}
\end{equation}
It should be noted that $\theta(\xi^k) = 0$ is neither necessary nor sufficient for $p_k \in \Sigma[x]^{n_g}$.


\subsubsection{Discussion}
The SOS projection and signed distance methods both define a distance metric for constraint violation via SDPs. The sampling approach is simple to implement and does not require an SDP solver, however, it is incomplete and its accuracy depends on the number of sample points.

Figure~\ref{fig:compConstraintVioalation} compares the computation times of these methods for increasing numbers $n$ of indeterminate variables, using 10 randomly generated SOS polynomials of degree six. The implementation is provided in the supplementary material~\cite{DARUS-5677_2026}. The projection method scales poorly, becoming impractical for larger $n$. 
In contrast, the signed distance remains efficient, while sampling with 1,000 points is fastest but least accurate.
Based on this comparison, we \added[id=TC]{implement} the signed distance method for its strong balance of accuracy and efficiency.
\begin{figure}[h!]
    \centering
    \setlength{\figH}{4cm}
     \setlength{\figW}{10cm}
%
%
\definecolor{mycolor1}{rgb}{1.00000,0.00000,1.00000}%
\definecolor{mycolor2}{rgb}{0.00000,1.00000,1.00000}%
\begin{tikzpicture}

\begin{axis}[%
width=0.951\figW,
height=\figH,
at={(0\figW,0\figH)},
scale only axis,
xmin=2,
xmax=5,
xtick={2,3,4,5}, 
xlabel style={font=\color{white!15!black}},
xlabel={$n$},
ymin=1e-2,
ymax=1e2,
ymode=log,
log basis y=10,
yminorticks=true,
minor y tick num=9, 
tick style={black, thin}, 
ylabel style={font=\color{white!15!black}},
ylabel={Computation Time (s)},
axis background/.style={fill=white},
xmajorgrids,
ymajorgrids,
yminorgrids,
legend style={at={(0.5,1.05)}, anchor=south, legend cell align=left, align=left, draw=white!15!black, column sep=1ex, legend columns=2} 
]
\addplot [color=red, line width=1.0pt, mark=o, mark options={solid, red}]
  table[row sep=crcr]{%
2	0.0503952\\
3	0.622989799999999\\
4	5.7394452\\
5	41.1370158\\
};
\addlegendentry{SOS Projection}

\addplot [color=green, line width=1.0pt, mark=asterisk, mark options={solid, green}]
  table[row sep=crcr]{%
2	0.0343625\\
3	0.044714\\
4	0.1323335\\
5	0.2840234\\
};
\addlegendentry{Signed Distance}

\addplot [color=blue, line width=1.0pt, mark=+, mark options={solid, blue}]
  table[row sep=crcr]{%
2	0.0179108\\
3	0.0146787\\
4	0.0239017\\
5	0.0410121\\
};
\addlegendentry{Sampling (1000)}

\addplot [color=mycolor1, line width=1.0pt, mark=x, mark options={solid, mycolor1}]
  table[row sep=crcr]{%
2	0.0201938\\
3	0.0538636\\
4	0.1274919\\
5	0.2870519\\
};
\addlegendentry{Sampling (10000)}

\addplot [color=mycolor2, line width=1.0pt, mark=square, mark options={solid, mycolor2}]
  table[row sep=crcr]{%
2	0.2069982\\
3	0.5309941\\
4	1.1561988\\
5	2.5298287\\
};
\addlegendentry{Sampling (100000)}

\end{axis}
\end{tikzpicture}%
    \caption{Performance comparison of methods used to measure constraint violation in SOS programming.}
    \label{fig:compConstraintVioalation}
\end{figure}
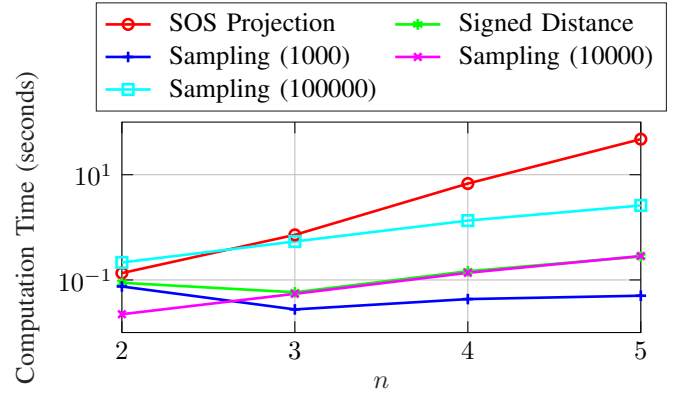

\subsection{Feasibility Restoration}
\label{subsec: FeasRes}
Small step lengths can cause the solver to stall or fail. If $\alpha^k < \alpha_{\text{min}}$ or infeasibility is detected in~\eqref{eq:QsubProb}, Alg.~\ref{alg: fullSQP} enters the feasibility restoration phase (lines 10 and 21 in Alg.~\ref{alg: fullSQP}) to maintain robustness~\cite[p.439]{nocedalNumericalOptimization2006}. This phase aims to reduce the constraint violation and provide a solution that is acceptable to the filter of the original problem.
\begin{remark}
    There is no guarantee that the restoration finds asolution acceptable to the filter. If it does not, we want it to converge to a local (infeasible) point. This information might be used by the user to take corrective action on the problem formulation~(cf.~\cite{wachterImplementationInteriorpointFilter2006a}).
\end{remark}

The feasibility restoration might be invoked after certain progress has \added[id=TC]{already} been made towards a local solution, but the constraint violation is still too high. In that case, we want the restoration not to deviate \added[id=TC]{too} much from the current, potentially close to (local) optimal solution. To achieve this, we introduce a weighted regularization term based on the current constraint violation~\cite{wachterImplementationInteriorpointFilter2006a,ulbrich2004b}. 

The high-level SOS feasibility restoration problem with regularization reads
\begin{subequations}
\label{eq: FeasResSoSProb}
	\begin{align}
		\min_{r \in \mathbb{R}^{n_g}, \xi_R \in \mathbb{R}^n} & \quad \sum_{i=1}^{n_g} r_i + \frac{\rho_k}{2} \|\xi_R - \xi^k\|_2^2  \\
		\text{s.t.} & \quad g(\xi_R) + r \odot s \in \Sigma[x]^{n_g}
	\end{align}
\end{subequations}
where $\rho_k > 0$ is a fixed penalty parameter, and $\xi^k$ is the iterate at which the restoration phase is initiated. Problem~\eqref{eq: FeasResSoSProb} is similar to~\eqref{eq:sgnddistance}, but it differs in that both the decision variables and the constraint violations are optimized simultaneously. The additional decision variables introduced are negligible compared to the overall problem size. 
Due to the explicit reduction of constraint violation in the cost function, previous approaches for nonlinear parameter optimization, such as~\cite{wachterImplementationInteriorpointFilter2006a}, needed a smooth reformulation of the problem using slack variables. In contrast, our approach {is already smooth}.

The penalty parameter is heuristically determined based on the constraint violation (signed-distance) of the current iterate. In our implementation, we make use of the following power function
\begin{equation}
    \rho_k = \begin{cases}
        \rho_\text{max} & \text{if $\theta(\xi^k) \leq \epsilon_{\text{feas}}$} \\
        \epsilon_{\text{feas}} / \theta(\xi^k) + \rho_\text{min} & \text{else} 
    \end{cases}
\end{equation}
with $ 0 <  \rho_{\text{min}} < \rho_{\text{max}}$ defined by the user. In case of large constraint violation, more weight is put onto feasibility, and vice-versa. The penalty $\rho_k$ is {computed} once at the {initiation} of the feasibility restoration.

Problem~\eqref{eq: FeasResSoSProb} is again nonlinear and hence we apply the previously described filter line search algorithm to solve this problem. Two modifications are made: 
\begin{enumerate}
    \item The restoration phase does not have its own restoration phase and, 
    \item the restoration phase is left once an iterate is acceptable to the filter of the original problem with an additional threshold on constraint violation, {which takes the form} $\theta(\xi_R^k) \leq \eta \theta(\xi^k)$ with $\eta \in [0,1]$  (cf.~\cite{wachterImplementationInteriorpointFilter2006a}). 
\end{enumerate}
The adapted algorithm for the feasibility restoration phase is outlined in Alg.~\ref{alg: feasRes}.
In the current implementation, the restoration is not invoked if the constraint violation is already below the assigned threshold.

\begin{algorithm}[h!]
\caption{Feasibility Restoration Phase}\label{alg: feasRes}
\KwIn{Initial guess $\xi^0$, $s^0$, parameters}
Initialize Hessian $R_0$, Augmented filter, $k = 0$\\
\While{$k \leq  N_\mathrm{max} $}{
    \text{Solve}~\eqref{eq:QsubProb} \text{to obtain} $\xi_R^{*}$ \text{and} $\lambda_R^{*}$  and set $\alpha_k = 1$\\
    \eIf{Feasible}
    {
    Execute filter line search (Algorithm \ref{alg: filterLineSearch})\\
    \If{$\alpha \leq \alpha_\mathrm{min}$}
    {
        Solver stalled-Terminate
    }

    }
    {
        Problem seems infeasible-Terminate
    }

    \eIf{ Acceptable to augmented filter and
         $|| \theta(\xi^{k+1})|| \leq \kappa \theta(\xi^{k})$ }
    {
        Terminate\\
    }
    {
        Compute  Hessian approximation\\
        set $k = k+1$ 
    }
 
}
\end{algorithm}

\subsection{Implementation Details}
This section provides additional implementation details and practical aspects. This includes Hessian approximation, the details about the implemented second-order correction and parameters used. 
\subsubsection{Hessian Approximation}
\label{subsubsec: HessianApprox}
In order to obtain an accurate but also positive definite estimate of the Hessian, several methods can be found in the literature. Our implementation provides several options \added[id=TC]{of which the} applicability depends, e.g., \added[id=TC]{on the specific} problem size.  \added[id=TC]{Options include} the damped Broyden-Fletcher-Goldfarb-Shanno (BFGS) update~\cite[Procedure 18.2]{nocedalNumericalOptimization2006}  and exact Hessian computation in combination with regularization techniques \added[id=TC]{such as} the Gershgorin bound~\cite[Section~2.5]{betts2010}, minimum Frobenius norm approach~\cite[p.~50]{nocedalNumericalOptimization2006}  or using the \textit{mirrored} version of the exact Hessian~\cite{quirynen2014}. 
The initialization of the Hessian is also crucial for convergence of the algorithm. The matrix $H_0$ in Alg.~\ref{alg: fullSQP} \added[id=TC]{is} initialized with the identity matrix, \added[id=TC]{unless} the user provides a scaling factor to improve convergence (cf.~\cite[Chapter~6]{nocedalNumericalOptimization2006}) or \replaced[id=TC]{the exact (regularized) Hessian is used}{makes use of exact Hessian computation in combination with regularization}.

\subsubsection{Second-Order Correction}
\label{subsec:SOC}
Merit functions or filters may converge slowly because large steps are often rejected, a phenomenon known as the {\em Maratos effect}. A potential cause of slow convergence is an inaccurate linear approximation of the constraint function. To mitigate this, \added[id=TC]{a second-order correction (SOC)} step \added[id=TC]{is} applied to adjust the search direction and \added[id=TC]{further} reduce constraint violations \cite{nocedalNumericalOptimization2006}. Quadratic approximations of the constraint functions would address this but lead to a challenging quadratically-constrained quadratic program. An alternative, outlined in \cite[p.~544]{nocedalNumericalOptimization2006}, is employed here. The approach involves replacing the quadratic term in the constraint function’s quadratic approximation with a constant quadratic term based on the current uncorrected search direction $\omega^k = (\xi_+ - \xi^k)$ (see \cite[p.~544]{nocedalNumericalOptimization2006} for details). \added[id=TC]{Specifically}, the constant term 
\begin{align*}
    g(\xi^k + \omega^k) - \nabla g(\xi^k)^\top \omega^k 
\end{align*}
is added to \eqref{eq: linConQsub}
\added[id=TC]{and the} adapted quadratic subproblem is \added[id=TC]{then} solved to obtain $\xi_\text{soc}^*$ which is used to adjust the search direction. 
The \added[id=TC]{thus corrected,} new trial point is then given by (with $\alpha = 1$)
\begin{equation}
    \label{eq: correctedSOCStep_trialPoint}
    \xi^{k+1} = \xi_\text{soc}^* + \omega^k
\end{equation}
The new \added[id=TC]{trial point} from \eqref{eq: correctedSOCStep_trialPoint} is also checked for filter acceptance. If it is not possible to find a feasible point, the SOC returns unsuccessful and step-length is reduced.

\subsubsection{Scaled Termination and Parameter}
\label{subsec: PractAsp}
To improve robustness against numerical errors and poor scaling, we make use of a scaled KKT condition \added[id=TC]{given by}~\cite{rauVski}
\begin{equation}
\left\| \nabla_\xi L(\xi^k, \lambda^k) \right\|_\infty \leq 
\frac{\epsilon_{\text{opt}} \, \max \left( 1, |f(\xi^k)| \right) + 
\left\| \langle \lambda^k , g(\xi^k) \rangle \right\|_\infty}
{\left\| \omega^k \right\|_\infty}
\end{equation}
that ensures
\begin{equation}
|f(\xi^{k+1}) - f(\xi^k)| \leq \epsilon_{\text{opt}}.
\label{eq: scaledOptCon}
\end{equation}
For a derivation of the scaled KKT condition see~\cite{nikolayzik2012}. 
In our implementation, we set the tolerances and parameters of the filter as follows: $\epsilon_{\text{opt}} = 10^{-4}$, $\epsilon_{\text{feas}} = 10^{-6}$, $s_f = 2$, $s_\theta = 0.9$, $\delta = 1$ and $\rho = 10^{-4}$, $\gamma_f = \gamma_\theta = 10^{-5}$, $\eta = 10^{-4}$,  
{$\rho_\mathrm{min} = 0.01$ and $\rho_\mathrm{max} = 1$}.

\section{Benchmarks}
\label{sec: Benchmarks}
We provide several numerical benchmarks \added[id=TC]{arising} from nonlinear system analysis and control design. Only nonconvex SOS problems with potentially nonaffine system dynamics are considered. The proposed filter line search algorithm  is benchmarked \added[id=TC]{against} the \added[id=TC]{hitherto} state-of-the-art method, coordinate-descent (CD, cf. ~\cite{chakrabortyEtAl2011,chakrabortyEtAl2011ii,iannelli2018,seilerBalas2010,yinEtAl2019,Yin2021,jarvisEtAl2003,Cunis2021aut,majumdarEtAl2013,lin2022,newton2025a,amesControlBarrierFunctions2019,clark2021,tan2006,schneeberger_sos_2023,loureiro2025}).
This section concludes with a comparison of the main characteristics of the two methods and \added[id=TC]{some} practical aspects such as initial guess generation for the sequential \added[id=TC]{quadratic SOS} approach. 

\subsection{Preliminaries}
All benchmarks are implemented in CaΣoS~\cite{Cunis2025} v1.0.0 and are available in the supplementary material~\cite{DARUS-5677_2026}. In CaΣoS, the SOS problems are parameterized once at the beginning, which we refer as build process with its corresponding build time. The solve time is the total time spent in the iterative procedure. We make use of MOSEK \cite{mosek} v11.1.8 as the underlying SDP solver. 
\added[id=TC]{As we are not aware of existing} benchmarks for nonconvex SOS problems, we make use of several SOS problems arising in systems and control \added[id=TC]{theory} with some modifications. Hence, detailed derivations of the SOS problem are omitted. The system dynamics and the considered polynomials are provided in the supplementary material~\cite{DARUS-5677_2026}. If not differently stated, we assume CD is converged if $|f_k - f_{k-1}| \leq \epsilon_{\text{opt}}$, which is equivalent to the scaled termination criterion in~\eqref{eq: scaledOptCon}. The maximum number of iterations is set to $N_{\text{max}} =  100$. We compare the number of iterations, total computation  time (total solve time + solver build time) and final cost for a given problem size. We denote the problem size by the number of system states $n$ \added[id=TC]{and the number of} constraints $n_\text{con.}$ (both linear and conic constraints) and decision variables $n_\text{dec.}$ of the underlying SDP. We report only the problem size of the sequential \added[id=TC]{quadratic} SOS approach, as it reflects the overall size. In contrast, CD involves multiple smaller subproblems, which may include redundant constraints or variables. A detailed breakdown is available in the supplementary material~\cite{DARUS-5677_2026}.

All results were computed on a \added[id=TC]{personal} computer with Windows 11 and MATLAB 2023b running on an AMD Ryzen 9 5950X 16-Core Processor with 3.40 GHz and 128 GB RAM.

\subsection{Region-of-Attraction Estimation}
We consider the problem \added[id=TC]{of computing an inner approximation for} the region-of-attraction (ROA) of nonlinear closed-loop systems.
We solve the standard ROA estimation problem
\begin{subequations}
\label{eq: SOSprob}
\begin{alignat}{2}
\min_{\substack{
    V\in \mathbb{R}[x], s \in \Sigma[x]
}} 
& \quad \|g_0 - V\|^2_{\mathbb R[x]} \label{eq: obj} \\[1ex]
\text{s.t.} \quad
& V - \epsilon(x) 
&& \in \Sigma[x] \label{eq: lyap1} \\
& s (V - \gamma) - \langle \nabla V ,f\rangle - \epsilon(x)  
&& \in \Sigma[x] \label{eq: lyap2} 
\end{alignat}
\end{subequations}
where $V$ is the sought Lyapunov function, $s$ is a SOS multiplier resulting from Theorem~\ref{Theo: genSprocedure} and $f(\cdot)$ is the closed loop dynamic, $g_0$ is a pre-defined \textit{safe set} which we make use of to find a less conservative solution  by minimizing the squared $l_2$-norm $\|\cdot\|_{\mathbb R[x]}$ on the space of polynomials. A small cost value indicates large portions of the safe set are covered by the ROA function. Thus, such a cost function is a good choice for (state) constrained problems, to obtain less conservative results and is also used in subsequent benchmarks in a similar fashion.
The stable level set is \added[id=TC]{fixed to} $\gamma = 1$, but could be also a decision variable.
Two examples are considered: The first example is taken from \cite{chakrabortyEtAl2011ii}, where the nonlinear ROA analysis is used to assess the robustness properties of the F/A-18 flight control laws. 
The second example evaluates scalability and robustness in terms of the number of system states $n$, using an $N$-link (planar) robot arm.

\subsubsection{F/A-18 Flight Control Laws}
We take the seven state closed-loop dynamics from~\cite[Appendix~B]{chakrabortyEtAl2011ii}. For both methods, we compute an initial Lyapunov candidate by linearizing the nonlinear closed-loop dynamics. For the sequential \added[id=TC]{quadratic SOS} approach, we additionally \added[id=TC]{choose the} initial SOS multiplier $s= \sum_i x_i^2$. Due to infeasibility of CD at the first iteration, we solved the first subproblem via bisection. After the first iteration, we used the pre-defined level set. The results of the two approaches are summarized in Table~\ref{tab:compROA_F18}. 

The sequential \added[id=TC]{quadratic SOS} approach only needs 19 iterations until convergence, whereas coordinate descent did not converge within the pre-assigned maximum number of iterations. The total time (solver build + solve time) as stated in Table~\ref{tab:compROA_F18} is substantially less for the sequential \added[id=TC]{quadratic SOS} approach.
The build time for the sequential \added[id=TC]{quadratic SOS} approach is \SI{4.81}{\second} \added[id=TC]{opposed to only \SI{1.15}{\second}} for coordinate descent, which includes several subproblems.

\begin{table}[h!]
  \caption{Comparison of sequential \added[id=TC]{quadratic} SOS (\added[id=TC]{SQ}) and coordinate descent (\added[id=TC]{CD}) for the F/A-18 ROA estimation problem.}
  \label{tab:compROA_F18}
  \centering
  \renewcommand{\arraystretch}{1.2} 
  \setlength{\tabcolsep}{6pt} 
  \begin{tabular}{cc cc cc cc c} 
    \hline
  \hline
    \textbf{n} & \multicolumn{2}{c}{\textbf{Size}} & \multicolumn{2}{c}{\textbf{Iter. [-]}} & \multicolumn{2}{c}{\textbf{Total time [s]}} & \multicolumn{2}{c}{\textbf{Cost [-]}} \\ 
               & {n$_\text{con.}$} & {n$_\text{dec.}$} & SQ & CD & SQ & CD & SQ & CD \\ 
    \hline
    7  & 2058 &15785 &  19 & 100 & 52.5 &   277.27    & 2.42  &   4557.3 \\
      \hline
  \hline
  \end{tabular}
\end{table}

\subsubsection{N-link Robot Arm}
Using an $N$-link (planar) robot arm, the problem scales to $n=2N$ states.  For each $N$, the dynamics and a linear quadratic control law (LQR) are computed and a quadratic polynomial approximation for the closed-loop system is calculated. This preparation phase is provided in the supplementary material~\cite{DARUS-5677_2026}.
We solve the problem twice using different initial guesses. In the first case, both algorithms are initialized with the Lyapunov function of the linear system, with the sequential \added[id=TC]{quadratic SOS} algorithm additionally \added[id=TC]{given} a simple quadratic polynomial for the SOS multiplier. In the second, we make use of a clearly infeasible initial guess. A more detailed breakdown is given in Table~\ref{tab:compNlink} and ~\ref{tab:compNlinkbadinit}.

\begin{table}[h!]
  \caption{Comparison of sequential \added[id=TC]{quadratic} SOS (SQ) and coordinate descent (CD) for the N-link robotic arm ROA estimation problem using a good initial guess. \added[id=TC]{$^\ddagger$ indicates that CD was initially infeasible.}}
  \label{tab:compNlink}
  \centering
  \renewcommand{\arraystretch}{1.2}
  \setlength{\tabcolsep}{6pt}
  \sisetup{
    table-alignment-mode = format
  }
  \begin{tabular}{@{}
    S[table-format=2.0, table-number-alignment=center] 
    S[table-format=6.0, table-number-alignment=right]
    S[table-format=6.0, table-number-alignment=right] 
    S[table-format=3.0, table-number-alignment=right]
    S[table-format=3.0, table-number-alignment=right] 
    S[table-format=6.1, table-number-alignment=right]
    S[table-format=6.1, table-number-alignment=right]
  @{}}
  \hline
  \hline
    \textbf{n} &
    \multicolumn{2}{c}{\textbf{Size}} &
    \multicolumn{2}{c}{\textbf{Iterations [-]}} &
    \multicolumn{2}{c}{\textbf{Time [s]}} \\
    &
    {{n$_\text{con.}$}} & {n$_\text{dec.}$} &
    {\textbf{SQ}} & {\textbf{CD}} &
    {\textbf{SQ}} & {\textbf{CD}} \\
    \hline
    4  &  85   &   248   & 7      & 23              &      0.2     &    1.3      \\
    6  &  245   &  843   & 9      & 15                  &      0.8     &   1.0  \\
    8   & 558   &  2136  & 9      & 17              &      2.2      &   3.1       \\
    10  & 1100 & 4535    & 10     & 16              &     8.4       &   9.5       \\
    12  & 1963 & 8544    & 11     & 23              &      29.3     &   42.4      \\
    14  & 3255 & 14763   & 10     & 26              &      74.6     &   153.3     \\
    16  & 5100 & 23888   & 9      & 29$^\ddagger$     &      179.9   &   547.3    \\
    18  & 7638 & 36711   & 10     & 33$^\ddagger$    &      504.8    &   1477.5   \\
    20 &  11025 &54120   & 11     & 28$^\ddagger$     &   1032.4     &   2735.9   \\
    22 & 15433 & 77099   &  10    & 32$^\ddagger$     &   2318.1     &   13106.1  \\
    24 & 21050 & 106728  &  11    & 50$^\ddagger$    &   5811.7     &   36668.1   \\
      \hline
  \hline
  \end{tabular}
\end{table}

\paragraph{Good Initial Guess}
The results are summarized in Table~\ref{tab:compNlink}. Both methods achieve optimality (using the assigned thresholds). Final costs are omitted because they are nearly identical, with the sequential \added[id=TC]{quadratic SOS approach} performing slightly better. Notably, the sequential \added[id=TC]{quadratic SOS approach} requires significantly less iterations than coordinate descent—especially for larger number of states—resulting in lower total computation time as shown in~Fig.~\ref{fig:nlink_comparsion} and Table~\ref{tab:compNlink}. In some cases, \added[id=TC]{indicated by $^\ddagger$,} CD was infeasible in the first subproblem \added[id=TC]{of the first iteration}. In these cases we gave CD a \textit{grace} iteration and tried to solve the second subproblem anyway. On average, the sequential \added[id=TC]{quadratic SOS approach} requires \SI{60}{\percent} less iterations and \SI{54}{\percent} less time to solve the problems. 

\begin{table}[h!]
  \caption{Comparison of sequential \added[id=TC]{quadratic} SOS (SQ) and coordinate descent (CD) for the N-link pendulum ROA estimation problem using a bad initial guess. \added[id=TC]{$^*$ indicates convergence to a feasible solution.}}
  \label{tab:compNlinkbadinit}
  \centering
  \renewcommand{\arraystretch}{1.2}
  \setlength{\tabcolsep}{6pt}
  \sisetup{
    table-alignment-mode = format
  }
  \begin{tabular}{@{}
    S[table-format=2.0, table-number-alignment=center] 
    S[table-format=6.0, table-number-alignment=right]
    S[table-format=6.0, table-number-alignment=right] 
    S[table-format=3.0, table-number-alignment=right]
    S[table-format=3.0, table-number-alignment=right] 
    S[table-format=6.1, table-number-alignment=right]
    S[table-format=6.1, table-number-alignment=right]
  @{}}
    \hline
  \hline
    \textbf{n} &
    \multicolumn{2}{c}{\textbf{Size}} &
    \multicolumn{2}{c}{\textbf{Iter. [-]}} &
    \multicolumn{2}{c}{\textbf{Time [s]}} \\
    &
    {{n$_\text{con.}$}} & {n$_\text{dec.}$} &
    {\textbf{SQ}} & {\textbf{CD}} &
    {\textbf{SQ}} & {\textbf{CD}} \\
    \hline
    4       &  85   &   248   & 45  & {--}    &      3.6    &   {--}       \\
    6       &  245   &  843   & 56  & {--}    &      7.8   &   {--}        \\
    8       & 558   &  2136  & 73  & {--}    &      30.9  &    {--}       \\
    10      & 1100 & 4535    & 11   & {--}    &     16.7   &   {--}       \\
    12      & 1963 & 8544    & 12  & {--}    &      52.3   &  {--}        \\
    14  & 3255 & 14763   & 11$^*$  & {--}    &      150.2   &  {--}   \\
    16  & 5100 & 23888   & 12$^*$ & {--}      &     454.2  & {--}     \\
    18      & 7638 & 36711   & 16   & {--}    &     1354.01   & {--}      \\
    20 &  11025 &54120   & 15$^*$  & {--}        &  3073.6    & {--}  \\
    22 & 15433 & 77099   & 14$^*$    &   {--}    &  9873.7   &    {--}\\
    24      & 21050 & 106728  & 15    &  {--}     &  22245.9    &    {--} \\
      \hline
  \hline
  \end{tabular}
\end{table}

\paragraph{Bad Initial Guess}
The second case considers the case with a \textit{bad} initial guess. We initialize all decision variables with negative quadratic polynomials. Such polynomials are clearly infeasible. 
Coordinate descent is not able to start the computation because it needs a feasible initial guess. Thus, no results can be reported in this case. In contrast, the proposed sequential algorithm with its feasibility restoration is able to solve the problem to full optimality or, in a few cases, at least to a feasible solution\footnote{\added[id=TC]{That is,} the thresholds for constraint violations are met, but the solution might be far from optimal.} (indicated by $^*$ in Table~\ref{tab:compNlinkbadinit}). This clearly shows the enhanced robustness of the proposed approach. The call of the feasibility restoration phase increases the number of iterations significantly for smaller number of states. For medium to large number of states, the number of iterations is only moderately increased. However, the average time per iteration is much larger compared to cases where the feasibility restoration is not called, which is expected.

\begin{figure}[h!]
    \centering
    \setlength{\figH}{6.5cm}
     \setlength{\figW}{7.5cm}
%
%
\definecolor{mycolor1}{rgb}{0.06600,0.44300,0.74500}%
\definecolor{mycolor2}{rgb}{0.12941,0.12941,0.12941}%
\begin{tikzpicture}

\begin{axis}[%
width=0.951\figW,
height=0.419\figH,
at={(0\figW,0.581\figH)},
scale only axis,
bar shift auto,
xmin=1.6,
xmax=26.4,
ymin=0,
ymax=90,
xtick={ 4,  6,  8, 10, 12, 14, 16, 18, 20, 22, 24},
xlabel={Number of states $n$},
ylabel style={font=\color{mycolor2}},
ylabel={$\frac{T_\mathrm{CD}-T_\mathrm{SQ}}{T_\mathrm{CD}}$ in \%},
axis background/.style={fill=white},
axis x line*=bottom,
axis y line*=left,
xmajorgrids,
ymajorgrids,
legend columns = 2,
legend style={at={(0.95,1.2)},legend cell align=left, align=left},
]
\addplot[ybar, bar width=1.6, fill=mycolor1, draw=mycolor2, area legend] table[row sep=crcr] {%
4	80.4266662539802\\
6	26.6406484608838\\
8	28.7440375148791\\
10	11.2985033977873\\
12	30.8188430073692\\
14	51.321440634858\\
16	67.1412966970301\\
18	65.8353516994445\\
20	62.265147626029\\
22	82.3135726727546\\
24	84.1506361664627\\
};
\addlegendentry{Rel. Improvement}
\addplot[mark=none, black,dashed, domain=0:50] {53.7233};
\addlegendentry{Mean value}

\end{axis}

\begin{axis}[%
width=0.951\figW,
height=0.419\figH,
at={(0\figW,0\figH)},
scale only axis,
bar shift auto,
xmin=1.6,
xmax=26.4,
xtick={ 4,  6,  8, 10, 12, 14, 16, 18, 20, 22, 24},
xlabel style={font=\color{mycolor2}},
xlabel={Number of states $n$},
ymin=0,
ymax=80,
ylabel style={font=\color{mycolor2}},
ylabel={$\frac{I_\mathrm{CD}-I_\mathrm{SQ}}{I_\mathrm{CD}}$ in \%},
axis background/.style={fill=white},
axis x line*=bottom,
axis y line*=left,
xmajorgrids,
ymajorgrids,
legend style={legend cell align=left, align=left}
]
\addplot[ybar, bar width=1.6, fill=mycolor1, draw=mycolor2, area legend] table[row sep=crcr] {%
4	69.5652173913043\\
6	40\\
8	47.0588235294118\\
10	37.5\\
12	52.1739130434783\\
14	61.5384615384615\\
16	68.9655172413793\\
18	69.6969696969697\\
20	60.7142857142857\\
22	68.75\\
24	78\\
};

\addplot[mark=none, black, dashed,domain=0:50] {59.45};

\end{axis}
\end{tikzpicture}%
    \caption{Relative performance improvement of sequential \added[id=TC]{quadratic} SOS (SQ) with respect to coordinate-descent (CD) in terms of total computation time $T_{(\cdot)}$ and \added[id=TC]{number of} iterations $I_{(\cdot)}$.}
    \label{fig:nlink_comparsion}
\end{figure}
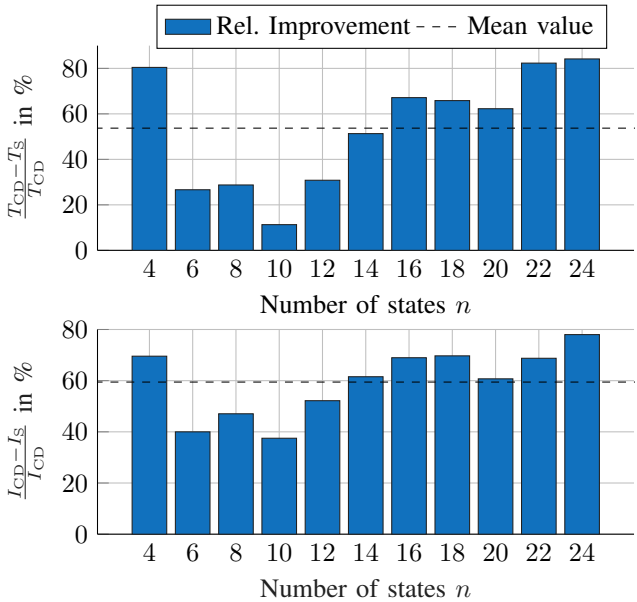

\begin{remark}
    It is unlikely that someone uses such an obviously infeasible initial guess and rather uses an initial guess similar to the first case. This is only made for demonstration purpose. It should be noted that an initial guess like in the first case does not guarantee convergence for coordinate descent as demonstrated in the previous \deleted[id=TC]{F/A-18} example.
\end{remark}

\subsection{Control Synthesis for Nonlinear Systems}
\label{subsec: caseII}
\added[id=TC]{We} consider the synthesis of a stabilizing control law for nonlinear systems \added[id=TC]{subject to} state and control constraints. Such problems are \added[id=TC]{also} relevant \added[id=TC]{for computing} terminal conditions \added[id=TC]{that} ensure asymptotic stability of nonlinear model predictive control.
We consider the input constraint set as polyhedral set
\begin{align*}
    \Uset = \{ u \in \mathbb R^m ~|~ H_\Uset u \leq \mathbf 1_p\}
\end{align*}
with $H_\Uset \in \mathbb R^{p \times m}$, $p \in \mathbb N$, and $\mathbf 1_p = (1,..,1)\in R^p$.
We solve the \added[id=TC]{nonconvex} SOS problem
\begin{subequations}
\label{eq: SOSprob1}
\begin{alignat}{2}
\min_{\substack{
    V, \hat\kappa \in \mathbb{R}[x], \\
    s_1, s_2 \in \Sigma[x], \\
    s_3 \in \Sigma[x]^m
}} 
& \quad \|h - V\|^2_{\mathbb R[x]} \label{eq: obj} \\[1ex]
\text{s.t.} \quad
& V - \epsilon(x)  
&& \in \Sigma[x] \label{eq: lyap1} \\
& s_1 (V - \beta) - \langle \nabla V,f(x, \hat\kappa)\rangle - \epsilon(x) 
&& \in \Sigma[x] \label{eq: lyap2} \\
& s_2 (V - \beta) - h(x) 
&& \in \Sigma[x] \label{eq: stateCon} \\
& s_3 (\hat h-\beta) - (H_\mathcal{U} \hat\kappa - \mathbf 1_p)  
&& \in \Sigma[x]^m \label{eq: lowerContControlDess}
\end{alignat}
\end{subequations}
where $V(\cdot)$ is a the sought Lyapunov function, $\beta > 0$ is the level set (either fixed or an additional decision variable), $\hat\kappa$ is the sought polynomial control law, and $s(\cdot)$ are the SOS multipliers resulting from Theorem~\ref{Theo: genSprocedure}. Eqs.~\eqref{eq: lyap1} and \eqref{eq: lyap2} are standard Lyapunov conditions \added[id=TC]{whereas} \eqref{eq: stateCon} and \eqref{eq: lowerContControlDess} enforce state and control constraints, respectively. We make use of a similar cost function as before. 
 
Again two examples are considered: The first example considers \added[id=TC]{the} nonaffine nonlinear dynamics of an aircraft's longitudinal motion with four states and two control \added[id=TC]{inputs}, which cannot be solved with coordinate descent in a direct manner. The second showcases the synthesis of a control law for control-affine satellite attitude dynamics. 
  
\subsubsection{Nonlinear Aircraft Dynamics}
In this example, we make use of the four-state longitudinal motion of an aircraft provided in~\cite{chakrabortyEtAl2011}. Compared to the previous examples, the control \added[id=TC]{inputs} enter nonaffine\added[id=TC]{ly} into the dynamics and hence, \added[id=TC]{the synthesis problem} cannot be solved by coordinate descent in a direct manner. For synthesis, we must re-write the system dynamics into a \added[id=TC]{control-}affine form. This would be possible by augmenting the system dynamics with a zero dynamic of the controls, resulting in $n_e =  n + m$ states and hence in a higher computational load. In contrast, with the sequential \added[id=TC]{quadratic SOS} approach we are able to directly solve this problem. In Table~\ref{tab:compConROA_GTM_State}, we provide only results for the sequential \added[id=TC]{quadratic} SOS approach.  In about \SI{13}{\second} one obtains a \textit{safe-by-design} \added[id=TC]{feedback law} for the nonlinear system.

\subsubsection{Spacecraft Attitude Control}
\label{subsubsec: caseII_sat}
The second example considers the control synthesis for a Hubble\added[id=TC]{-like spaceborne} telescope~\cite{nurre1995} along with attitude, rate and control constraints. The details can be found in the supplementary material~\cite{DARUS-5677_2026}. The \added[id=TC]{six-state dynamics are comprised of} angular rates and Modified-Rodrigues parameter\added[id=TC]{s}, \added[id=TC]{the latter of} which describe the spacecraft attitude. The results are summarized in the second row of Table~\ref{tab:compConROA_GTM_State}. Whereas the sequential \added[id=TC]{quadratic SOS} approach solves the problem in slightly above one minute to (local) optimality, the coordinate descent approach failed after six iterations (indicated by $^\dagger$), far from optimality.

\begin{table}[h!]
  \caption{Comparison of sequential \added[id=TC]{quadratic} SOS (SQ) and coordinate descent (CD) for the constrained \added[id=TC]{control synthesis problems}. \added[id=TC]{$^\dagger$ indicates failure.}}
  \label{tab:compConROA_GTM_State}
  \centering
  \renewcommand{\arraystretch}{1.2} 
  \setlength{\tabcolsep}{6pt} 
  \begin{tabular}{cc cc cc cc c} 
    \hline
  \hline
    \textbf{n} & \multicolumn{2}{c}{\textbf{Size}} & \multicolumn{2}{c}{\textbf{Iter. [-]}} & \multicolumn{2}{c}{\textbf{Time [s]}} & \multicolumn{2}{c}{\textbf{Cost [-]}} \\ 
               & {n$_\text{con.}$} & {n$_\text{dec.}$} & SQ & CD & SQ & CD & SQ & CD \\ 
    \hline
    4  & 1099 &  7333 & 20  & {--} & 12.5  &   {--}  & 0.34   &  {--} \\
    6  & 5543 &  57048 & 5  & 6$^\dagger$ & 73.9  &   91.1  & 0.01   &  29.6 \\
      \hline
  \hline
  \end{tabular}
\end{table}

\subsection{Reachability Analysis}
\added[id=TC]{We consider} the computation of \added[id=TC]{an} inner approximation of the reachable set \added[id=TC]{for} two different dynamical systems with two and four states, respectively. The problem formulation and the derivation can be found in~\cite{Yin2021,Cunis2021aut}. 
A state constraint set of the form $\mathcal{X} = \{x \in \mathbb R^n \mid h(x) \leq 0\}$ is \added[id=TC]{prescribed} and the goal is to \added[id=TC]{find} a set of the form {$\Omega^V_{t,\beta} = \{x \in \mathbb R^\ell \mid V(x,t) \leq \beta \}$}, where $V:\mathbb R^\ell \times \mathbb R \rightarrow \mathbb R$ is a \textit{reachability storage function} and $\beta > 0$ bounds the set, together with a time-varying control law $u(t) =\hat\kappa(t,x(t))$. 
 
We slightly modify the problem by using a similar cost as in the first benchmark instead of \added[id=TC]{the} bisection approach in~\cite{Yin2021}. We solve the \added[id=TC]{nonconvex} SOS problem (cf.\cite{Yin2021,Cunis2021aut})
\begin{subequations}
\label{eq: SOSprob2}
\begin{alignat}{2}
\min_{\substack{
    s_0 \in \Sigma[x],\beta \in \mathbb R,\\
    V, \hat\kappa \in \mathbb{R}[t,x], \\
    s_1, s_2, s_3, s_4 \in \Sigma[x,t],\\
    s_5, s_6, \in \Sigma[x,t]^m\\
}} &
\quad \|h - V\|^2 \label{eq: cost2SOS} \\
\text{s.t.} \quad  &s_0 V(T,x) - P && \in \Sigma[x] \label{eq: termSetSOS} \\
&s_1 (V - \beta) - s_2 g - h && \in \Sigma[t,x] \label{eq: safeSetSoS} \\
&s_3 (V - \beta) - s_4 g - \tau && \in \Sigma[t,x] \label{eq: dissipSoS} \\
&s_5 (V - \beta) - s_6 g - (H_\mathcal{U} \hat\kappa - \mathbf 1_p)   && \in \Sigma[t,x]^m \label{eq: lowerContSOSRe}
\end{alignat}
\end{subequations}
with  $\tau = \nabla_t V + \langle \nabla_x V, f(x, \hat\kappa) \rangle$  where  $s_{(\cdot)}$ are the SOS multipliers resulting from Theorem~\ref{Theo: genSprocedure}, $P$ is a polynomial describing the terminal set, and the polynomial {$g = (t-t_0)(T-t)$} ensures continuous-time constraint satisfaction for $t \in [t_0, T]$.
Eq.~\eqref{eq: termSetSOS} ensures that the level set of the storage function lies in the terminal set at $t = T$, Eq.~\eqref{eq: safeSetSoS}  ensures state constraint  satisfaction, Eq.~\eqref{eq: dissipSoS} is a dissipation inequality that ensures convergence, and Eq.~\eqref{eq: lowerContSOSRe} ensures control constraint satisfaction. 

The results are summarized in Table~\ref{tab:reachabilityComp}. 
Both algorithms are initialized using the Lyapunov function from the linearized system. For the sequential \added[id=TC]{quadratic SOS approach}, all multipliers are initialized with simple quadratic polynomials.

\begin{table}[h!]
  \caption{Comparison of sequential \added[id=TC]{quadratic} SOS (SQ) and coordinate descent (CD) for reachability analysis for different system sizes.}
  \label{tab:reachabilityComp}
  \centering
  \renewcommand{\arraystretch}{1.2} 
  \setlength{\tabcolsep}{6pt} 
  \begin{tabular}{cc cc cc cc c} 
    \hline
  \hline
      \textbf{n} &\multicolumn{2}{c}{\textbf{Size}} & \multicolumn{2}{c}{\textbf{Iter. [-]}} & \multicolumn{2}{c}{\textbf{Time [s]}} & \multicolumn{2}{c}{\textbf{Cost [-]}} \\ 
               & {n$_\text{con.}$} & {n$_\text{dec.}$} & SQ & CD & SQ & CD & SQ & CD \\ 
    \hline
     2  & 943   & 6042 &  51 & 78 & 22.5 &   21.5  & 0.84  &  0.86 \\
      4  &  3171 & 27647  & 11  & 100 & 42.8  & 371.2    &  0.04 & 0.39\\
        \hline
  \hline
  \end{tabular}
\end{table}

In the \added[id=TC]{2D example}, the sequential \added[id=TC]{quadratic SOS approach} needs significantly less iterations, yet both approaches take \added[id=TC]{a similar} total time to solve the problem.
For the 4D example, the sequential \added[id=TC]{quadratic} SOS \added[id=TC]{approach} needs significantly less iterations \added[id=TC]{than} CD. The overall solve time is lower due to the low number of iterations. The sequential \added[id=TC]{quadratic SOS approach} was able to solve both problems to a smaller cost. For the CD, we were not able to find an optimal sublevel set $\beta$ \added[id=TC]{with} coordinate descent. \deleted[id=TC]{It either turned into infeasibility  or was computed to zero by the underlying solver.} Thus, for this \added[id=TC]{example} we set $\beta = 0.1$ (larger values lead to infeasibility), a feasible but potentially conservative value. The search for the sublevel set could be carried out by an additional bisection which \added[id=TC]{would result} in a higher computational effort.

\subsection{Compatible Control Barrier and Control Lyapunov Function Synthesis}
\added[id=TC]{We consider} the synthesis of Control Barrier Function (CBF) and a \added[id=TC]{compatible} Control Lyapunov Function (CLF). \added[id=TC]{Here, {\em compatible}} means both conditions \added[id=TC]{can be} satisfied simultaneously, which is \added[id=TC]{shown} by constructing  a \added[id=TC]{shared} control law that \added[id=TC]{for} all CBF and the CLF \added[id=TC]{conditions} \cite{schneeberger_sos_2023}. \added[id=TC]{Adapting} from~\cite{schneeberger_sos_2023} by also incorporating control constraints, 
\added[id=TC]{we solve} the nonconvex SOS problem
\begin{subequations}
    \label{eq: SOSprob1}
\begin{alignat}{2}
    &\min_{\substack{s_1, s_2, s_4 \in\Sigma[x], s_3 \in\Sigma[x]^p \\  \hat h , \hat V \in \mathbb{R}[x], \hat\kappa \in \mathbb{R}[x]^m} } \; \|g - h\|^2_{\mathbb R[x]}   \\
    &\text{s.t. }  \hat V - \varepsilon (x^\top x)                &  & \in\Sigma[x]^{}  \label{eq: strictPos}\\
    &\hphantom{\text{s.t. }}   s_1 (\hat h-\beta) - g                                     &  & \in \Sigma[x]^{} \\
    &\hphantom{\text{s.t. }} s_2 (\hat h-\beta) -  \dot{\hat{h}} - \gamma_h ( \hat{h} - \beta )&  & \in \Sigma[x]^{} \label{eq: CBFSOS}\\
    &\hphantom{\text{s.t. }}  s_3 (\hat h-\beta) - (H_\mathcal{U} \hat\kappa - \mathbf 1_p)                 & & \in \Sigma[x]^p \label{eq: lowerContSOS} \\
    &\hphantom{\text{s.t. }}  s_4 (\hat h-\beta) - \tau                               &  & \in\Sigma[x]^{} \label{eq: terminalPenSOS}
\end{alignat}
\end{subequations}
with $\dot {\hat h} = \langle \nabla \hat h, f(x,\kappa) \rangle$ and  $\tau = \ip{ \nabla \hat V(x), f(x,\hat\kappa(x))}   + \gamma_VV$, where $\gamma_h$ and $\gamma_V$ are extended \added[id=TC]{class-}$\mathcal K$ functions of the form $a(s) = a \, s$ with $a > 0$.
The \added[id=TC]{example} deals with constrained satellite control, with one CBF and one CLF, using the same parameters as \added[id=TC]{in} Subsection~\ref{subsubsec: caseII_sat}. Unlike the previous \added[id=TC]{examples}, we concentrate on the computational aspects of the sequential \added[id=TC]{quadratic} SOS approach and do not compare \added[id=TC]{it} to CD.  The problem has 1756 constraints and 9404 decision variables.
\added[id=TC]{The} sequential SOS approach needs 53 iterations to find an
optimal solution with optimal cost $J(\cdot) = 0.04$. A
detailed breakdown of the computational effort is provided
in Table~\ref{tab:CBFCLF}. The largest portion is to solve the underlying
quadratic subproblem (\SI{67}{\percent}), followed by the filter line search
(\SI{31.8}{\percent}). The high computational effort of the line search can be
primarily traced back to the constraint violation check using
the signed-distance approach (\SI{69}{\percent}). Other components
such as Hessian approximation or function evaluations are negligible compared to the SDPs.

\begin{table}[h!]
    \caption{Timing results for the CBF-CLF Satellite Example in seconds.}
    \label{tab:CBFCLF}
    \centering
    \begin{tabular}{c c  c c } 
       \hline
       \hline
        \textbf{Solve time (Alg.~\ref{alg: fullSQP})} & \textbf{Prob.~(\ref{eq:QsubProb})}   & \textbf{Alg.~\ref{alg: filterLineSearch}}  & \textbf{Others} \\
        68.7 & 46.1 &   21.9 &  0.7 \\
         \hline
  \hline
    \end{tabular}
  \end{table}

\subsection{Discussion and Practical Consideration\added[id=TC]{s}}
The sequential \added[id=TC]{quadratic} SOS \added[id=TC]{approach} with its filter line search allows to directly deal with nonconvex, nonlinear SOS problems, even with noncontrol-affine polynomial system dynamics. Unlike coordinate descent, there is no requirement that the initial guess is a feasible solution\replaced[id=TC]{ and}{. Coordinate descent has in general no convergence guarantees, but} local convergence guarantees can be given for the sequential \added[id=TC]{quadratic SOS} approach under mild assumptions. If the problem is infeasible, confirmed by the feasibility restoration, the user might use these information to adapt the problem \added[id=TC]{and change} the polynomial structure or revise the problem formulation. The feasibility restoration adds robustness to the overall algorithm, as demonstrated by the $N$-link robot arm example. However, \added[id=JO]{there are no guarantees for the overall algorithm or feasibility restoration to find a feasible point~\cite{wachterImplementationInteriorpointFilter2006a}}. Whereas a nonfeasible initial guess might be advantageous, a bad initial guess might lead to insufficient progress \added[id=JO]{or even divergence since it is a Newtons-based method~\cite{betts2010}}. The filter line search alleviates \added[id=TC]{this} problem, but does not fully resolve it. \added[id=TC]{Some} strategies for initial guess generation are  outlined next.

\begin{enumerate} \item If possible, use information from the problem itself, e.g., linearize the system and compute a linear control law and corresponding for Lyapunov function. 
\item Use a simple initial guess (e.g., all polynomials are initialized with basis one) and solve only for a few iterations. Take the result and either use it as an initial guess or use the information to construct an \added[id=TC]{improved} initial guess. 
\item Solve a simplified version, e.g., remove some of the constraints or relax the constraint bounds. 
\item Remove cost function and try to solve a feasibility problem, or relax the thresholds for feasibility. 
\end{enumerate} 

Whereas sequential SOS is capable of solving nonconvex problems in a more direct manner and can reduce the computational effort, it is still restricted by the capabilities of the underlying SDP solver, i.e., the computational effort can still be high for large problems. Yet, the proposed approach is one step closer to more tractable methods for nonlinear system analysis and control design using SOS.




\section{Conclusion}
This paper demonstrates significant computational savings for nonconvex sum-of-squares problems arising in system analysis and control design by leveraging a \added[id=TC]{sequential quadratic programming approach with} filter line search algorithm. In several real-world-inspired case studies, it is demonstrated that the \added[id=TC]{sequential SOS} approach outperforms state-of-the-art methods, thereby making a step towards more practical synthesis and analysis tools for \added[id=TC]{control systems}. \added[id=TC]{The results of} comprehensive benchmarks \added[id=TC]{are theoretically backed by local superlinear or even quadratic convergence rates under regularity and continuity assumptions.} Besides, this paper is accommodated with an open-source implementation.

\begin{acknowledgements}
The authors thank Benoît Legat for his input for the paper and Renato Loureiro for his help to improve the manuscript.
\end{acknowledgements}

\begin{funding}
    This research is partially supported by the Ministry of Science, Research
and Arts of the state of Baden-Württemberg under funding number MWK32-
7531-49/13/7 for the project DaSO: Data-driven Spacecraft Operations.
\end{funding}

\begin{sourcode}
    The open-source implementation of the proposed filter line search algorithm is available as part of CaΣoS: \url{https://github.com/iFR-OFC/casos}.
\end{sourcode}

\begin{dataavail}
    The data regarding the benchmarks are available in \cite{DARUS-5677_2026}.
\end{dataavail}

%

 \section*{Declarations}
\begin{confint}
     The authors declare that they have no conflict of interest.
\end{confint}

\bibliographystyle{spphys}       
\bibliography{OptLit}   

\end{document}